\newtheorem{theorem}{Theorem}
\newtheorem{remark}{Remark}
\newtheorem{lemma}{Lemma}
\newtheorem{corollary}{Corollary}
\newtheorem{openproblem}{Open Problem}
\numberwithin{equation}{section}
\title{Monotonicity, convexity, and inequalities for functions involving gamma function}  
\author{Peipei Du} 
\address{Peipei Du, School of Science, Zhejiang Sci-Tech University, Hangzhou 310018, China}
\email{peipei\_du@126.com}
\author{Gendi Wang*} 
\address{*Gendi Wang ( Corresponding author ): School of Science, Zhejiang Sci-Tech University, Hangzhou 310018, China}
\email{gendi.wang@zstu.edu.cn}
\begin{document}  

\newcounter{minutes}\setcounter{minutes}{\time}
\divide\time by 60
\newcounter{hours}\setcounter{hours}{\time}
\multiply\time by 60 \addtocounter{minutes}{-\time}
\def\thefootnote{}
\footnotetext{ {\tiny File:~\jobname.tex,
          printed: \number\year-\number\month-\number\day,
          \thehours.\ifnum\theminutes<10{0}\fi\theminutes }}
\makeatletter\def\thefootnote{\@arabic\c@footnote}\makeatother

\maketitle

\begin{abstract}
In this paper,
we study some properties such as the monotonicity, logarithmically complete monotonicity,
logarithmic convexity, and geometric convexity, of the combinations of gamma function and power function.
The results we obtain generalize some related known results for parameters with specific values.
\end{abstract}

{\small \sc Keywords.} {gamma function; monotonicity; logarithmically complete monotonicity;
logarithmic convexity; geometric convexity; inequalities}

{\small \sc 2020 Mathematics Subject Classification.} {33E50 (33E20)}

\section{Introduction}
The {\it gamma function} defined by
\begin{align*}
\Gamma(x)
=\int_0^\infty t^{x-1}e^{-t}{\rm d}t \quad ({\rm Re} \, x>0)
\end{align*}
is one of the most important functions in analysis and its application.

\medskip

The {\it psi} ({\it digamma}) {\it function}, the logarithmic derivative of the gamma function,
and the {\it polygamma functions} can be expressed as
\begin{equation*}
\psi(x)
=\frac{\Gamma'(x)}{\Gamma(x)}
=-\gamma+\int_0^\infty \frac{e^{-t}-e^{-xt}}{1-e^{-t}}{\rm d}t
\end{equation*}
and
\begin{equation*}
\psi^{(n)}(x)
=(-1)^{n+1}n! \sum_{k=0}^{\infty} \frac{1}{(x+k)^{n+1}}
\end{equation*}
for ${\rm Re}\,x>0\,, \, n=1,2,\cdots$\,, where
$\gamma=\lim\limits_{n\rightarrow\infty}\left(\sum\limits_{k=1}^n\dfrac1k-\log n\right)=0.57721\cdots$
is the Euler-Mascheroni constant.

\medskip

Let $I\subset(0\,,\infty)$ be an interval and $f:I\rightarrow(0\,,\infty)$ be a continuous function.
We say that $f$ is {\it geometrically convex (geometrically concave)}  on $I$ if the following is true:
\begin{equation*}
f(\sqrt{x_1x_2})
\leq(\geq)
\sqrt{f(x_1)f(x_2)}
\end{equation*}
for all $x_1,x_2\in I$\,, see \cite{PML,CPN}.

\medskip

Let $I\subset\mathbb{R}$ be an interval and $f:I\rightarrow(0\,,\infty)$ be a continuous function.
We say that $f$ is {\it logarithmically convex (logarithmically concave)}, log-convex (log-concave) for abbreviation, if
\begin{align*}
f\left(\frac{x+y}{2}\right)\leq(\geq)\sqrt{f(x)f(y)}
\end{align*}
for all $x\,,y\in I$\,, see \cite{CPN}.

\medskip

A function $f$ is called to be {\it logarithmically completely monotonic} ( LCM ) on an interval $I\subset\mathbb{R}$
if its logarithm $\log f$ satisfies
\begin{align}\label{ineq2}
(-1)^n\left(\log f(x)\right)^{(n)} \geq0
\end{align}
for all $x\in I$ and $n=1,2,\cdots$\,.
Moreover, the function $f$ is said to be strictly LCM on $I$ if the inequality (\ref{ineq2}) is strict, see \cite{ATR,FP}.
Clearly, the function $f$ is decreasing and log-convex if $f$ is LCM on $I$\,.
The analytical properties for the gamma function and related special functions have been extensively studied recently,
see \cite{GAI,YSZ,ZZH,QLD,QNL,BCK,TZH}.

\medskip

For $a>0\,,c\in\mathbb{R}$\,, let $x\in(-a\,,\infty) \backslash \{0\}$ and
\begin{align*}
f_{a,c,\pm1}(x)
\equiv \left(\frac{(\Gamma(x+a))^{\frac 1x}}{x^c}\right)^{\pm1}\,.
\end{align*}

The functions $f_{1,0,-1}$ and $f_{1,1,+1}$ are decreasing on $(0\,,\infty)$\,.
In addition, it is also proved that $f_{1,1-\gamma,-1}$ is decreasing on $(1\,,\infty)$, see \cite{DA}\,.
As a further study,
the function $f_{1,0,-1}$ is LCM on $(-1\,,\infty)$ \cite[Theorem 1]{BNF}
and $f_{1,0,+1}$ is geometrically convex on $(0\,,\infty)$ \cite[Theorem 1]{CZZ}\,.
Moreover, the conditions for $f_{1,c,\pm1}$ to be LCM on $(0\,,\infty)$ are shown \cite[Theorem 3, Theorem 4]{BNF}.
It is showed that
$f_{1,c,-1}$ $(f_{1,c,+1})$ is strictly decreasing on $(0\,,\infty)$
if and only if $c\leq0$ $(c\geq1)$ \cite [Theorem 4(b)]{SLCY}.

\medskip

For $a>0\,,c\in\mathbb{R}$\,, let $x\in(-a\,,\infty) \backslash \{0\}$ and
\begin{align*}
g_{a,c,\pm1}(x)
\equiv \left(\frac{(\Gamma(x+a))^{\frac 1x}}{(x+a)^c}\right)^{\pm1}\,.
\end{align*}

In \cite{FCP}, Theorem 1 shows that
$g_{1,1,+1}$ is strictly decreasing and strictly log-convex on $(0\,,\infty)$\,,
and Theorem 2 shows that $g_{1,\frac12,-1}$ is strictly decreasing and strictly log-convex on $(0\,,\infty)$.
As a generalization,
the conditions for $g_{1,c,\pm1}$ to be LCM on $(-1\,,\infty)$ are found \cite[Theorem 2]{BNF}.
Theorem 2 in \cite{SLCY} displays that
$g_{1,c,+1}$ $(g_{1,c,-1})$ is strictly decreasing on $(0\,,\infty)$
if and only if $c\geq1$ $\left(c\leq\frac{\pi^2}{12}\right)$\,,
and $g_{1,c,+1}$ ($g_{1,c,-1}$) is log-convex on $(0\,,\infty)$
if and only if $c\geq1$ ($c\leq c_0$)\,, where
$0.77797\cdots=\frac{75(28\zeta(3)+\pi^3)}{64}-75\leq c_0<18\left(3-\gamma-\log\pi-\frac{\pi^2}{8}\right)=0.79837\cdots$\,.
In addition, the conditions for $g_{a,c,\pm1}$ to be LCM on $(0\,,\infty)$
have been discussed \cite[Theorem 1.2, Remark 2.1]{MQA}\,.

\medskip

The purpose of the present paper is to further study the analytical properties of the gamma function.
Specifically, motivated by \cite[Theorem 1]{BNF} which says $f_{1,0,-1}$ is LCM on $(-1\,,\infty)$\,,
we study the monotonicity property of $f_{a,0,-1}$ for all $a\in\mathbb{R}$ and
find the necessary and sufficient conditions for $f_{a,0,-1}$ to be LCM
either on $(0\,,\infty)$ or on $(-a\,,\infty)$ in Theorem \ref{thm1}.
In Theorem \ref{thm2}, we also show the monotonicity, logarithmic convexity and geometric convexity properties
of $f_{a\,,c\,,+1}$ for certain values $(a,c)$\,,
which is a generalization about the specific parameters in some corresponding results in \cite{CZZ,BNF,SLCY}.
Similarly, in Theorem \ref{thm3} we investigate the monotonicity and geometric convexity properties
of $g_{a\,,c\,,+1}$ for certain values $(a,c)$\,
and obtain a generalization of some results in \cite{BNF,SLCY} and an improvement of the result in \cite{MQA}.

\medskip

Before presenting the main results,
we give some ranges of parameters, which are needed in describing the corollaries, as follows.

\medskip

Let
\begin{align*}
&D_1=\left\{(a,c)\Big|\frac12\leq a \leq 1\,, c\in \mathbb{R}\right\}\,,
& &D_2=\{(a,c)|a\geq2\,, c\in \mathbb{R}\}\,,\\
&D_3=\{(a,c)|1<a<2\,, c\geq0\}\,,
& &D_4=\{(a,c)|1<a<2\,, c\leq0\}\,,\\
&D_5=\left\{(a,c)\Big|\frac12\leq a \leq 1\,, c\geq1\right\}\,, \quad
& &D_6=\{(a,c)|a\geq2\,, c\geq1\}\,,\\
&D_7=\{(a,c)|a=1\,, c\leq0\}\,, \quad
& &D_8=\{(a,c)|a=2\,, c\leq0\}\,,\\
&D_9=\left\{(a,c)\Big|\frac12\leq a \leq 1\,, c\leq0\right\}\,, \quad
& &D_{10}=\{(a,c)|a\geq2\,, c\leq0\}\,,\\
&D_{11}=\left\{(a,c)\Big|a=2\,, c\leq\frac{\pi^2}{6}-1\right\}\,.
\end{align*}

\medskip

Let $a\in\mathbb{R}$\,, we define the function
\begin{align*}
g_1(x) \equiv f_{a,0,-1}(x)=\frac{1}{(\Gamma(x+a))^{\frac 1x}}\,,
\end{align*}
where $x\in(-a,\infty)$ for $a\leq0$\,; $x\in(-a,\infty) \backslash \{0\}$ for $a>0$\,.

Since
\begin{equation*}
g_1(0^-)=
\left\{
\begin{aligned}
&\infty\,, \quad &&0<a<1 \,\,\hbox{or}\,\, a>2\,,\\
&0\,, \quad      &&1<a<2\,,
\end{aligned}
\right.
\quad\quad
g_1(0^+)=
\left\{
\begin{aligned}
&0\,, \quad &&0<a<1 \,\,\hbox{or}\,\, a>2\,,\\
&\infty\,, \quad      &&1<a<2\,,
\end{aligned}
\right.
\end{equation*}
we only define $g_1(0)$ as follows
\begin{equation*}
g_1(0)=
\left\{
\begin{aligned}
&e^{\gamma}\,, \quad && a=1\,,\\
&e^{\gamma-1}\,,\quad && a=2\,.
\end{aligned}
\right.
\end{equation*}

\medskip

\begin{theorem}\label{thm1}
(1) The function $g_1$ is strictly increasing on $(-a\,,x_0)$ and
strictly decreasing on $(x_0\,,\infty)$ if and only if $a\leq0$\,;
$g_1$ is strictly decreasing on $(-a\,,x_1)$ and $(x_2\,,\infty)$\,,
and strictly increasing on $(x_1\,,0)$ and $(0\,,x_2)$ if and only if $0<a<1$ or $a>2$\,;
and $g_1$ is strictly decreasing on $(-a,0)$ and $(0,\infty)$ if and only if $1\le a\le 2$\,,
where $x_i$ satisfies $x_i\psi(x_i+a)=\log\Gamma(x_i+a)$\,, $i=0\,,\,1\,,\,2$ and $x_1<0<x_2$\,.

(2) The function $g_1$ is strictly LCM on $(0\,,\infty)$ if and only if $1\leq a\leq2$\,;
and $g_1$ is strictly LCM on $(-a\,,\infty)$ if and only if $a=1$ or $a=2$\,.
\end{theorem}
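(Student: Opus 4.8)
The plan is to funnel everything through the single auxiliary function
\[
\phi(x):=\log\Gamma(x+a)-x\,\psi(x+a),
\]
because differentiating $\log g_1(x)=-\tfrac1x\log\Gamma(x+a)$ gives $(\log g_1)'(x)=\phi(x)/x^{2}$, so on the domain the sign of $g_1'$ coincides with that of $\phi$. The engine of part~(1) is the identity $\phi'(x)=-x\,\psi'(x+a)$: since $\psi'>0$ on $(0,\infty)$ and $x+a>0$ on the domain, $\phi$ is strictly increasing on $(-a,0)$ (when that interval is nonempty, i.e.\ when $a>0$) and strictly decreasing on $(0,\infty)\cap(-a,\infty)$. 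What remains is to count the zeros of $\phi$, for which I will use the boundary limits: from the behaviour of $\log\Gamma$ and $\psi$ near $0$ and Stirling's formula at infinity one gets $\phi(x)\to-\infty$ as $x\to\infty$; $\phi(x)\to\log\Gamma(a)$ as $x\to0$ when $a>0$; and, as $x\to(-a)^{+}$, $\phi(x)\to+\infty$ if $a\le 0$ but $\phi(x)\to-\infty$ if $a>0$ (the term $-x\psi(x+a)\sim a/(x+a)$ then dominating $\log\Gamma(x+a)\sim-\log(x+a)$).

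The case analysis is then straightforward. If $a\le0$, the domain lies in $(0,\infty)$, where $\phi$ decreases strictly from $+\infty$ to $-\infty$ and so has a single zero $x_{0}$, giving the first pattern. If $a>0$, the sign of $\log\Gamma(a)$ decides matters, and I will use the elementary facts (consequences of the convexity of $\Gamma$ and $\Gamma(1)=\Gamma(2)=1$) that $\Gamma(a)>1$ for $a\in(0,1)\cup(2,\infty)$, $\Gamma(a)<1$ for $a\in(1,2)$, and $\Gamma(a)=1$ for $a\in\{1,2\}$. When $\log\Gamma(a)>0$, i.e.\ $a\in(0,1)\cup(2,\infty)$, $\phi$ climbs from $-\infty$ to $\log\Gamma(a)>0$ on $(-a,0)$ and falls from $\log\Gamma(a)>0$ to $-\infty$ on $(0,\infty)$, yielding one zero $x_{1}\in(-a,0)$ and one zero $x_{2}\in(0,\infty)$ and the four-piece pattern; when $\log\Gamma(a)\le0$, i.e.\ $1\le a\le2$, one has $\phi(x)<\log\Gamma(a)\le0$ throughout each of $(-a,0)$ and $(0,\infty)$, so $g_1$ is strictly decreasing on each. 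Because the three ranges of $a$ partition $\mathbb{R}$ and the three monotonicity pictures are pairwise incompatible, each implication upgrades to an equivalence, and the $x_i$ are exactly the solutions of $\phi(x)=0$, i.e.\ of $x\psi(x+a)=\log\Gamma(x+a)$.

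For part~(2) the key tool is the representation, valid for $a>0$ and $x\in(-a,\infty)\setminus\{0\}$,
\[
-\log g_1(x)=\frac{\log\Gamma(x+a)}{x}=\int_{0}^{1}\psi(a+xt)\,\mathrm{d}t+\frac{\log\Gamma(a)}{x},
\]
coming from $\int_{a}^{a+x}\psi(s)\,\mathrm{d}s=\log\Gamma(a+x)-\log\Gamma(a)$ after $s=a+xt$; differentiating gives $-(\log g_1)'(x)=\int_{0}^{1}t\,\psi'(a+xt)\,\mathrm{d}t-\log\Gamma(a)/x^{2}$. I will use that $g_1$ is LCM on an interval $I$ exactly when $-(\log g_1)'$ is completely monotonic on $I$, together with $\psi'(y)=\int_{0}^{\infty}\frac{s e^{-ys}}{1-e^{-s}}\,\mathrm{d}s$, which displays $x\mapsto\psi'(a+xt)$, for each fixed $t$, as a nonnegative superposition of the exponentials $e^{-(ts)x}$ and hence completely monotonic on any interval where $a+xt>0$; integrating over $t\in(0,1)$ preserves this (with strictly sign-definite derivatives), and $-\log\Gamma(a)/x^{2}=-\log\Gamma(a)\int_{0}^{\infty}s e^{-sx}\,\mathrm{d}s$. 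Consequently, for $a\in[1,2]$ (so $\log\Gamma(a)\le0$), $-(\log g_1)'$ is a sum of completely monotonic functions on $(0,\infty)$, so $g_1$ is strictly LCM there; and for $a\in\{1,2\}$ we have $\log\Gamma(a)=0$, so $-(\log g_1)'(x)=\int_{0}^{1}t\,\psi'(a+xt)\,\mathrm{d}t$ is strictly completely monotonic on all of $(-a,\infty)$ (indeed $a+xt>0$ for all $x>-a$, $t\in[0,1]$), the removable singularity at $x=0$ agreeing with the prescribed $g_1(0)=e^{-\psi(a)}$, which is $e^{\gamma}$ for $a=1$ and $e^{\gamma-1}$ for $a=2$. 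This handles both ``if'' directions.

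For the converses: if $a\le0$ then $g_1$ is not decreasing on $(-a,\infty)$ by part~(1), hence not LCM there, and for $a<0$ it is not even defined on all of $(0,\infty)$; if $a\in(0,1)\cup(2,\infty)$ then $-(\log g_1)'(x)=\bigl(x\psi(x+a)-\log\Gamma(x+a)\bigr)/x^{2}\to-\infty$ as $x\to0^{+}$ (the numerator tending to $-\log\Gamma(a)<0$), so $-(\log g_1)'$ is negative near $0$ and cannot be completely monotonic on $(0,\infty)$; and on the interval $(-a,\infty)$, for every $a>0$ with $a\notin\{1,2\}$ the point $0$ is a genuine singularity of $g_1$ since $g_1(0^{-})$ and $g_1(0^{+})$ equal $0$ and $\infty$ in one order or the other. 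Hence LCM on $(0,\infty)$ forces $1\le a\le2$ and LCM on $(-a,\infty)$ forces $a=1$ or $a=2$. I expect the main obstacle to be the asymptotic bookkeeping of part~(1) — pinning down each endpoint limit of $\phi$ across the parameter ranges and confirming the zero counts; in part~(2), once the displayed representation is in hand the complete monotonicity is essentially immediate, the remaining care being the Leibniz/Fubini justifications for differentiating under the integral signs and the degenerate ranges $a\le0$, $1<a<2$ in the converse.
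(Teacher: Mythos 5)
Your argument is correct, and it splits naturally into a part that coincides with the paper and a part that does not. For part (1) you are doing exactly what the paper does: your $\phi$ is the paper's auxiliary function $h_1(x)=-x\psi(x+a)+\log\Gamma(x+a)$ (Lemma \ref{lem2}), the identity $\phi'(x)=-x\psi'(x+a)$ is the paper's $\widetilde h_1'(t)=-(t-a)\psi'(t)$ after $t=x+a$, and the endpoint limits plus the sign of $\log\Gamma(a)$ give the same zero count; only note the small sign slip in your parenthetical, where the correct asymptotic as $x\to(-a)^+$ is $-x\psi(x+a)\sim -a/(x+a)$, which is what actually yields $\phi\to-\infty$ for $a>0$ and $+\infty$ for $a\le0$ (your stated conclusions are right). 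For part (2) you take a genuinely different route: the paper computes $(-1)^n(\log g_1)^{(n)}=\frac{n!}{x^{n+1}}\delta_n(x)$ by Leibniz's rule, studies the sign of $\delta_n$ through its derivative, and needs a separate induction (Lemma \ref{lem6}) to control all derivatives at $x=0$ for $a=1,2$; you instead write $\frac{\log\Gamma(x+a)}{x}=\int_0^1\psi(a+xt)\,\mathrm{d}t+\frac{\log\Gamma(a)}{x}$ and reduce strict LCM to strict complete monotonicity of $-(\log g_1)'=\int_0^1 t\,\psi'(a+xt)\,\mathrm{d}t-\log\Gamma(a)/x^2$, which for $\log\Gamma(a)\le 0$ (i.e.\ $1\le a\le 2$) is a sum of completely monotonic pieces on $(0,\infty)$, and for $\log\Gamma(a)=0$ (i.e.\ $a\in\{1,2\}$) is completely monotonic on all of $(-a,\infty)$ with the singularity at $0$ removed automatically, recovering $g_1(0)=e^{-\psi(a)}$. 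This buys you a shorter proof that dispenses with Lemma \ref{lem6} and makes strictness and smoothness at $0$ transparent, at the modest cost of invoking the Laplace/series representation of $\psi'$ and justifying differentiation under the integral; your converse arguments (failure of monotonicity near $0^+$ when $\Gamma(a)>1$, and the genuine $0$--$\infty$ discontinuity at $x=0$ when $a>0$, $a\notin\{1,2\}$) match the paper's in substance and are sound.
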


\begin{remark}
The sufficient condition of the LCM property for $g_1$ on $(0\,,\infty)$ in Theorem \ref{thm1} (2)
can also be obtained by taking $c=0$ in \cite[Remark 2.1]{MQA}.
\end{remark}

\medskip

The following inequalities (\ref{inequality 1}) and (\ref{inequality 2}) can be easily derived from
the monotonicity and logarithmic convexity properties of $g_1$ in Theorem \ref{thm1} (2).

\begin{corollary}
(1) For $0<x<y$\,, the inequality
\begin{align}\label{inequality 1}
\frac{\left(\Gamma(x+a)\right)^{\frac 1x}}{\left(\Gamma(y+a)\right)^{\frac 1y}}
<1
\end{align}
holds for $1\leq a\leq2$\,.

\medskip

(2) For $x,y>0$\,, the inequality
\begin{align}\label{inequality 2}
\frac{\left(\Gamma\left(\frac{x+y}{2}+a\right)\right)^{\frac{2}{x+y}}}
{\sqrt{\left(\Gamma(x+a)\right)^{\frac 1x}\left(\Gamma(y+a)\right)^{\frac 1y}}}
\geq 1
\end{align}
holds for $1\leq a\leq2$\,.
The equality is true if and only if $x=y$\,.
\end{corollary}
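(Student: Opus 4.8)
The plan is to deduce both inequalities directly from the monotonicity and logarithmic convexity of $g_1$ established in Theorem \ref{thm1}(2). Recall that for $1\le a\le 2$ the function $g_1(x)=1/(\Gamma(x+a))^{1/x}$ is strictly LCM on $(0,\infty)$; in particular it is strictly decreasing and strictly log-convex there. It is convenient to work with the reciprocal $h(x)\equiv 1/g_1(x)=(\Gamma(x+a))^{1/x}$: since $g_1$ is decreasing and log-convex, $h$ is \emph{increasing} and log-\emph{concave} on $(0,\infty)$.

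For part (1), fix $0<x<y$. Because $h$ is strictly increasing on $(0,\infty)$ we have $h(x)<h(y)$, i.e. $(\Gamma(x+a))^{1/x}<(\Gamma(y+a))^{1/y}$; dividing both sides by the positive quantity $(\Gamma(y+a))^{1/y}$ yields exactly \eqref{inequality 1}. (Equivalently, one may say $g_1(x)>g_1(y)$ so $g_1(x)/g_1(y)>1$, and invert.)

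For part (2), fix $x,y>0$ and apply the definition of log-convexity of $g_1$ at the points $x$ and $y$:
\begin{align*}
g_1\!\left(\frac{x+y}{2}\right)\le \sqrt{g_1(x)\,g_1(y)}\,,
\end{align*}
with equality if and only if $x=y$ (strict log-convexity). Writing this out,
\begin{align*}
\frac{1}{\left(\Gamma\!\left(\frac{x+y}{2}+a\right)\right)^{\frac{2}{x+y}}}
\le \sqrt{\frac{1}{(\Gamma(x+a))^{1/x}}\cdot\frac{1}{(\Gamma(y+a))^{1/y}}}\,,
\end{align*}
and taking reciprocals of both (positive) sides reverses the inequality and gives precisely \eqref{inequality 2}, with equality iff $x=y$.

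There is essentially no obstacle here: the content is entirely in Theorem \ref{thm1}(2), and the corollary is just a transcription of "decreasing" and "log-convex" into the displayed quotient form, together with a reciprocal and a square root. The only point requiring a word of care is the equality case in (2), which comes from the \emph{strictness} of the log-convexity of $g_1$ asserted in Theorem \ref{thm1}(2) (strict LCM), so that $g_1((x+y)/2)=\sqrt{g_1(x)g_1(y)}$ forces $x=y$.
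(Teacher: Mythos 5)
Your proposal is correct and follows exactly the route the paper intends: the paper itself offers no separate proof of the corollary, stating only that (\ref{inequality 1}) and (\ref{inequality 2}) follow from the monotonicity and logarithmic convexity of $g_1$ in Theorem \ref{thm1}(2), which is precisely what you carry out (decreasingness of $g_1$ gives (1), strict log-convexity of $g_1$ plus taking reciprocals gives (2) with equality iff $x=y$). No gaps.
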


\bigskip

\begin{theorem}\label{thm2}
For $a>0\,,c\in\mathbb{R}$\,, let $x\in(0\,,\infty)$ and
$$g_2(x)
\equiv f_{a,c,+1}(x)
=\dfrac{(\Gamma(x+a))^{\frac 1x}}{x^c}\,.$$

(1) The function $g_2$ is strictly decreasing on $(0\,,\infty)$ if and only if $c\geq1$ for $\dfrac12\leq a\leq 1$ or $a\geq2$\,;
$g_2$ is strictly increasing on $(0\,,\infty)$ if and only if $c\leq0$ for $a=1$ or $a=2$\,;
and $g_2$ is strictly increasing on $(0\,,\infty)$ if and only if $c\leq h_2(x_3)$ for $1<a<2$\,,
where $x_3$ satisfies $x_3^2\psi'(x_3+a)+\log\Gamma(x_3+a)=x_3\psi(x_3+a)$
and $h_2(x_3) \equiv x_3\psi'(x_3+a)$\,.

(2) The function $g_2$ is strictly log-convex on $(0\,,\infty)$ if and only if $c\geq1$ for $a\geq2$\,;
and $g_2$ is strictly log-concave on $(0\,,\infty)$ if and only if $c\leq0$ for $a=2$\,.

(3) The function $g_2$ is geometrically convex on $(0\,,\infty)$ if and only if $(a\,,c)\in D_1\cup D_2$\,;
and $g_2$ is geometrically concave on $(0\,,x_3)$ and geometrically convex on $(x_3\,,\infty)$ if and only if $(a,c)\in D_3\cup D_4$\,.
\end{theorem}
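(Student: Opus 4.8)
The plan is to work throughout with $\log g_2(x) = \frac{1}{x}\log\Gamma(x+a) - c\log x$, so that
$$(\log g_2(x))' = \frac{x\psi(x+a)-\log\Gamma(x+a)}{x^2} - \frac{c}{x}
= \frac{1}{x^2}\Bigl[x\psi(x+a)-\log\Gamma(x+a) - cx\Bigr].$$
Define $\phi(x) \equiv x\psi(x+a)-\log\Gamma(x+a)$; then the sign of $(\log g_2)'$ is the sign of $\phi(x)-cx$, and since $g_2>0$, monotonicity of $g_2$ is governed entirely by this. Part (1) will follow from a careful study of the auxiliary function $h(x) \equiv \phi(x)/x = \psi(x+a) - \frac{1}{x}\log\Gamma(x+a)$: $g_2$ is decreasing iff $c \ge \sup_{x>0} h(x)$ and increasing iff $c \le \inf_{x>0} h(x)$. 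Computing $h'(x) = \frac{1}{x^2}\bigl[x^2\psi'(x+a) + \log\Gamma(x+a) - x\psi(x+a)\bigr]$, so the critical points of $h$ are exactly the solutions of $x^2\psi'(x+a)+\log\Gamma(x+a) = x\psi(x+a)$, which is the equation defining $x_3$. For $1<a<2$ I expect to show this equation has a unique positive root $x_3$ at which $h$ attains its infimum, giving $\inf h = h(x_3) = x_3\psi'(x_3+a) = h_2(x_3)$ (using the defining relation to simplify), hence the stated "iff $c\le h_2(x_3)$". For $\frac12\le a\le1$ or $a\ge2$ I expect $h$ to be monotone with $\sup h = h(0^+)$ and the boundary behaviour $h(0^+) = \lim_{x\to0^+}[\psi(x+a)-\frac1x\log\Gamma(x+a)]$; a Taylor expansion of $\log\Gamma$ at $x=a$ gives $h(0^+) = \psi(a) - \psi(a) = 0$ when... — more precisely $\frac1x\log\Gamma(x+a) \to \psi(a)$ is false; rather $\log\Gamma(x+a) = \log\Gamma(a) + x\psi(a) + o(x)$ so $h(0^+) = \psi(a) - \psi(a) = 0$ only if $\log\Gamma(a)=0$, i.e. $a=1$ or $a=2$. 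This is why $a=1$, $a=2$ are the cases where the increasing-iff-$c\le0$ statement holds; for general $a$ one gets $h(0^+) = -\infty\cdot\text{sgn}(\log\Gamma(a))$, forcing the cutoff. The limit $h(\infty)$ will be handled by the asymptotics $\psi(x+a)\sim\log x$, $\log\Gamma(x+a)\sim x\log x$, giving $h(\infty)=1$, which pins down the "decreasing iff $c\ge1$" boundary.

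For part (2), log-convexity of $g_2$ means $(\log g_2)'' \ge 0$. Differentiating again,
$$(\log g_2(x))'' = \frac{1}{x^3}\Bigl[x^2\psi'(x+a) - 2x\psi(x+a) + 2\log\Gamma(x+a)\Bigr] + \frac{c}{x^2},$$
so the sign of $(\log g_2)''$ is the sign of $\Psi(x) + cx$ where $\Psi(x) \equiv x^2\psi'(x+a)-2x\psi(x+a)+2\log\Gamma(x+a)$. Thus $g_2$ is log-convex on $(0,\infty)$ iff $c \ge \sup_{x>0}(-\Psi(x)/x)$ and log-concave iff $c\le\inf_{x>0}(-\Psi(x)/x)$. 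I would show $\Psi$ and the quantity $-\Psi(x)/x$ have the required monotone/sign behaviour for $a\ge2$ (resp. $a=2$), using the series representation $\psi^{(n)}(x+a) = (-1)^{n+1}n!\sum_{k\ge0}(x+a+k)^{-(n+1)}$ to control signs of the relevant combinations, together with the boundary values at $0^+$ and $\infty$. At $x=\infty$, $-\Psi(x)/x \to 1$ (from $x^2\psi'\sim x$, $2x\psi\sim 2x\log x$, $2\log\Gamma\sim 2x\log x$, so $\Psi(x)\sim -x$); at $x=0^+$ with $a=2$ one computes $-\Psi(0^+)/x \to 0$. These two endpoint values yield the cutoffs $c\ge1$ and $c\le0$ respectively, and the monotonicity of $-\Psi(x)/x$ in between upgrades "boundary" to "iff".

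Part (3) is the part I expect to be the main obstacle, because geometric convexity of $g_2$ on $(0,\infty)$ is equivalent to convexity of the function $t \mapsto \log g_2(e^t)$ in $t$, i.e. to $\bigl(x(\log g_2(x))'\bigr)' \ge 0$, equivalently $x(\log g_2)'' + (\log g_2)' \ge 0$. Writing this out using the formulas above, the $c$-terms combine as $\frac{c}{x} - \frac{c}{x} = 0$ — so remarkably the condition $x(\log g_2)'' + (\log g_2)' \ge 0$ is \emph{independent of $c$ for the leading part} and in fact reduces to $x\psi'(x+a) \ge$ (something $c$-free), meaning geometric convexity does not depend on $c$ at all — which is exactly why the sets $D_1, D_2, D_3, D_4$ impose no constraint on $c$ beyond sign conditions inherited from elsewhere, or impose constraints only of the form $c\ge0$/$c\le0$. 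Concretely I compute $x(\log g_2(x))'' + (\log g_2(x))' = \frac{1}{x}\bigl[x^2\psi'(x+a) - x\psi(x+a) + \log\Gamma(x+a)\bigr] \cdot \frac1x$ — wait, more carefully the combination is $\frac{1}{x^2}\bigl[x^2\psi'(x+a) + \log\Gamma(x+a) - x\psi(x+a)\bigr] = h'(x)$, exactly the derivative studied in part (1)! So geometric convexity of $g_2$ on $(0,\infty)$ holds iff $h'(x)\ge0$ on $(0,\infty)$, and the sign-change of $h'$ at $x_3$ gives "geometrically concave on $(0,x_3)$, convex on $(x_3,\infty)$" precisely when $1<a<2$. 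Thus the whole of part (3) is a corollary of the analysis of $h'$ and $\Psi$-type functions already done. The real work, and the main obstacle, is establishing that the sign patterns of $h'$ (and of $\Psi$, $-\Psi(x)/x$) are exactly as claimed on each parameter region $\frac12\le a\le1$, $a\ge2$, $1<a<2$; this requires delicate estimates on the polygamma series, likely via the integral representations $\psi'(x) = \int_0^\infty \frac{te^{-xt}}{1-e^{-t}}\,dt$ and a reduction of each sign question to positivity of a single explicit kernel, and I expect separate lemmas handling $a\le1$ versus $a\ge2$ because the sign of $\log\Gamma(x+a)$ (positive vs. potentially negative near $x=0$) behaves oppositely in these ranges.
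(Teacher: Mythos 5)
Your reductions coincide exactly with the paper's: your $h=\phi(x)/x$ is the paper's $h_2$, your $-\Psi(x)/x$ is the paper's $h_3$, and your observation that $x(\log g_2)'(x)=h_2(x)-c$ makes geometric convexity a $c$-free question about the monotonicity of $h_2$ is precisely how part (3) is handled (via the equivalence you cite, which is the paper's Lemma 1). Your boundary limits are also the right ones: $h_2(\infty)=h_3(\infty)=1$ forces the cutoff $c\ge 1$, and $h_2(0^+)=-\operatorname{sgn}(\log\Gamma(a))\cdot\infty$ unless $\log\Gamma(a)=0$, i.e.\ $a\in\{1,2\}$, where $h_2(0^+)=0$, forces $c\le 0$; the identity $h_2(x_3)=x_3\psi'(x_3+a)$ from the defining relation of $x_3$ is correct. (One small slip: for $\tfrac12\le a\le1$ or $a\ge2$ you write $\sup h=h(0^+)$; since $h_2$ is increasing there, $\sup h=h(\infty)=1$ and $\inf h=h(0^+)$ — you in fact use the correct assignments afterwards.)

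The genuine gap is the one you flag yourself: the monotonicity of $h_2$ on $(0,\infty)$ for $\tfrac12\le a\le1$ or $a\ge2$ (and its single interior minimum at $x_3$ for $1<a<2$), and the monotonicity of $h_3$ for $a\ge2$, are asserted as expectations, not proven — and this is the entire technical content of the theorem. The paper closes it (Lemmas 3 and 4) without integral representations, by shifting $t=x+a$, writing $h_2'$ with numerator $h_{21}(t)=(t-a)^2\psi'(t)-(t-a)\psi(t)+\log\Gamma(t)$, and showing $h_{21}'(t)=(t-a)\left[(t-a)\psi''(t)+\psi'(t)\right]>0$ via the two-sided polygamma bounds $\frac{(n-1)!}{x^n}+\frac{n!}{2x^{n+1}}\le(-1)^{n+1}\psi^{(n)}(x)\le\frac{(n-1)!}{x^n}+\frac{n!}{x^{n+1}}$ together with the refined identity $\psi''(x)=-x^{-2}-x^{-3}-\tfrac12x^{-4}+\theta/(6x^6)$: the lower bound reduces to positivity of the quadratic $(2a-1)t^2+(2a-1)t+a$ on $(a,\infty)$, which is where the threshold $a\ge\tfrac12$ enters, and then $h_{21}(t)>\lim_{t\to a^+}h_{21}(t)=\log\Gamma(a)\ge0$ exactly when $a\le1$ or $a\ge2$ — which is where the excluded band $1<a<2$ (where $\log\Gamma(a)<0$, so $h_{21}$ changes sign once and $h_2$ has a minimum at $x_3$) enters. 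The analogous computation for $h_3$ reduces to $2(2-a)t-6a<0$, which is where $a\ge2$ enters. If you intend to pursue your integral-representation route instead, you still must locate these same two parameter thresholds, and you should expect the sign of $\log\Gamma(a)$ at the endpoint $t\to a^+$, not a kernel positivity argument alone, to be what decides the ``if and only if''.
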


\begin{remark}
It is clear that Theorem \ref{thm2} (3) is a generalization of \cite[Theorem 1]{CZZ} which says that
$f_{1,0,+1}$ is geometrically convex on $(0\,,\infty)$\,.
\end{remark}

\medskip

Theorem \ref{thm2} leads to the following corollary.

\medskip

\begin{corollary}
(1) For $0<x<y$\,, the inequality
\begin{align}\label{inequality 4}
\frac{\left(\Gamma(x+a)\right)^{\frac 1x}}{\left(\Gamma(y+a)\right)^{\frac 1y}}
>\left(\frac xy\right)^c
\end{align}
holds for
$(a,c)\in D_5 \cup D_6$\,;
and inequality (\ref{inequality 4}) is reversed for
$(a,c)\in D_7 \cup D_8$\,.

\medskip

(2) For $x,y>0$\,, the inequality
\begin{align}\label{inequality 5}
\frac{\left(\Gamma\left(\frac{x+y}{2}+a\right)\right)^{\frac{2}{x+y}}}
{\sqrt{\left(\Gamma(x+a)\right)^{\frac 1x}\left(\Gamma(y+a)\right)^{\frac 1y}}}
\leq
\left(\frac{x+y}{2\sqrt{xy}}\right)^c
\end{align}
holds for $(a,c)\in D_6$\,;
and inequality (\ref{inequality 5}) is reversed for $(a,c)\in D_8$\,.
The equalities are true if and only if $x=y$\,.

\medskip

(3) For $x\,,y>0$\,, the inequalities
\begin{align}\label{inequality 3}
\left(\frac xy\right)^{\frac{y\psi(y+a)-\log\Gamma(y+a)}{y}}
\leq
\frac{\left(\Gamma(x+a)\right)^{\frac 1x}}{\left(\Gamma(y+a)\right)^{\frac 1y}}
\leq
\left(\frac xy\right)^{\frac{x\psi(x+a)-\log\Gamma(x+a)}{x}}
\end{align}
hold for  $(a,c)\in D_1 \cup D_2$\,.
The equalities are true if and only if $x=y$\,.
\end{corollary}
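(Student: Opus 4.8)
The plan is to obtain all three parts as direct consequences of Theorem \ref{thm2}, after rewriting each displayed inequality in terms of $\log g_2(x)=\frac{\log\Gamma(x+a)}{x}-c\log x$. For part (1), the condition $(a,c)\in D_5\cup D_6$ says $c\ge1$ together with $\frac12\le a\le1$ or $a\ge2$, which is exactly the case in which Theorem \ref{thm2}(1) gives that $g_2$ is strictly decreasing on $(0,\infty)$; hence $0<x<y$ forces $g_2(x)>g_2(y)$, that is $\frac{(\Gamma(x+a))^{1/x}}{x^c}>\frac{(\Gamma(y+a))^{1/y}}{y^c}$, and multiplying through by the positive quantity $x^c/(\Gamma(y+a))^{1/y}$ yields (\ref{inequality 4}). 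When $(a,c)\in D_7\cup D_8$, i.e. $c\le0$ with $a=1$ or $a=2$, Theorem \ref{thm2}(1) makes $g_2$ strictly increasing, which reverses the inequality; no equality clause is needed since $x\ne y$.

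For part (2), if $(a,c)\in D_6$ then Theorem \ref{thm2}(2) gives that $g_2$ is strictly log-convex on $(0,\infty)$, so $\log g_2\left(\frac{x+y}{2}\right)\le\frac12\left(\log g_2(x)+\log g_2(y)\right)$ with equality if and only if $x=y$. Writing out both sides, the left-hand side is $\frac{2}{x+y}\log\Gamma\left(\frac{x+y}{2}+a\right)-c\log\frac{x+y}{2}$ and the right-hand side is $\frac12\left(\frac{\log\Gamma(x+a)}{x}+\frac{\log\Gamma(y+a)}{y}\right)-\frac{c}{2}\log(xy)$; moving the gamma contributions to one side and the logarithmic contributions to the other and exponentiating produces exactly (\ref{inequality 5}). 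For $(a,c)\in D_8$ the same computation with $g_2$ strictly log-concave reverses (\ref{inequality 5}); in both cases equality holds precisely when $x=y$.

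For part (3), the key point is that geometric convexity of $g_2$ on $(0,\infty)$ is the same as convexity of $G(t):=\log g_2(e^t)$ on $\mathbb{R}$, and that a short computation gives $G'(t)=\frac{x\psi(x+a)-\log\Gamma(x+a)}{x}-c$ with $x=e^t$; thus the exponents appearing in (\ref{inequality 3}) are precisely $G'(\log x)+c$ and $G'(\log y)+c$. Since $(a,c)\in D_1\cup D_2$, Theorem \ref{thm2}(3) shows $g_2$ is geometrically convex, hence $G$ is convex, and the standard supporting-line inequalities $G'(s)(t-s)\le G(t)-G(s)\le G'(t)(t-s)$, taken with $t=\log x$ and $s=\log y$, become the two inequalities in (\ref{inequality 3}) once the terms $c\log(x/y)$ on each side cancel. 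For the equality assertion one invokes that the geometric convexity established in the proof of Theorem \ref{thm2}(3) is strict (the pertinent derivative being strictly increasing), so $G$ is strictly convex and equality in (\ref{inequality 3}) forces $t=s$, i.e. $x=y$.

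The manipulations in (1) and (2) are routine; the only step needing care is the translation in (3) — recognizing the two exponents as the one-sided derivatives of the convex function $G$ and verifying that the $c$-dependent terms drop out — after which (\ref{inequality 3}) is just the familiar pair of tangent-line bounds for a convex function, so I anticipate no serious obstacle.
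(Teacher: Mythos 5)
Your proposal is correct and follows exactly the route the paper intends: the paper gives no separate proof of this corollary, deriving (1) from the monotonicity in Theorem \ref{thm2}(1), (2) from the (strict) log-convexity/concavity in Theorem \ref{thm2}(2), and (3) from the geometric convexity in Theorem \ref{thm2}(3) combined with the inequality characterization in Lemma \ref{lem1}(3), where $\frac{xg_2'(x)}{g_2(x)}=h_2(x)-c$ and the $c$-terms cancel. Your part (3) merely re-derives Lemma \ref{lem1}(3) via the convexity of $\log g_2(e^t)$ instead of citing it, and your appeal to the strict monotonicity of $h_2$ (Lemma \ref{lem3}(1)) correctly justifies the equality clause.
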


\bigskip

\begin{theorem}\label{thm3}
For $a>0\,,c\in\mathbb{R}$\,, let $x\in(0\,,\infty)$ and
$$g_3(x)
\equiv g_{a,c,+1}(x)
=\dfrac{(\Gamma(x+a))^{\frac 1x}}{(x+a)^c}\,.$$

(1) The function $g_3$ is strictly decreasing on $(0\,,\infty)$ if and only if $c\geq1$ for $a\geq2$\,;
$g_3$ is strictly increasing on $(0\,,\infty)$ if and only if $c\leq\dfrac{\pi^2}{6}-1$ for $a=2$\,;
and $g_3$ is strictly increasing on $(0\,,\infty)$ if and only if $c\leq h_4(x_4)$
for $\dfrac{3+\sqrt{159}}{12}\le a<2$\,,
where $x_4$ satisfies $x_4^2(x_4+a)\psi'(x_4+a)+(x_4+2a)\log\Gamma(x_4+a)=x_4(x_4+2a)\psi(x_4+a)$
and $h_4(x_4) \equiv\dfrac{x_4(x_4+a)\psi(x_4+a)-(x_4+a)\log\Gamma(x_4+a)}{x_4^2}\,.$

(2) The function $g_3$ is geometrically convex on $(0\,,\infty)$ for $(a,c)\in D_9 \cup D_{10}$\,;
and $g_3$ is geometrically concave on $(0\,,x_3)$ for $(a,c)\in D_3$
and geometrically convex on $(x_3\,,\infty)$ for $(a,c)\in D_4$\,,
where $x_3$ is the same as in Theorem \ref{thm2} (1).
\end{theorem}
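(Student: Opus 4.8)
My plan is to reduce everything to sign analysis of a single auxiliary function, exactly paralleling the treatment of $g_2$ in Theorem \ref{thm2}. Write $\log g_3(x) = \frac{1}{x}\log\Gamma(x+a) - c\log(x+a)$. Differentiating,
\[
\frac{g_3'(x)}{g_3(x)} = \frac{x\psi(x+a) - \log\Gamma(x+a)}{x^2} - \frac{c}{x+a}
= \frac{1}{x^2(x+a)}\Big[(x+a)\big(x\psi(x+a)-\log\Gamma(x+a)\big) - cx^2\Big].
\]
So the monotonicity of $g_3$ on $(0,\infty)$ is governed by the sign of
\[
\varphi(x) \equiv (x+a)\big(x\psi(x+a) - \log\Gamma(x+a)\big) - c x^2 .
\]
First I would record the boundary behavior: as $x\to 0^+$, using $\log\Gamma(x+a) = \log\Gamma(a) + x\psi(a) + O(x^2)$ and $\psi(x+a) = \psi(a) + x\psi'(a)+O(x^2)$, one gets $x\psi(x+a)-\log\Gamma(x+a) = -\log\Gamma(a) + x^2\psi'(a) + O(x^3)$, so $\varphi(0^+) = -a\log\Gamma(a)$, which is $0$ when $a=1,2$ and otherwise nonzero; and $\varphi'(0^+)$ picks out the next order. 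As $x\to\infty$, Stirling gives $x\psi(x+a)-\log\Gamma(x+a) \sim x\log x - x\log x + \cdots$, more precisely this quantity grows like $\tfrac12\log x$ to leading order after the linear terms cancel, so $(x+a)\big(x\psi(x+a)-\log\Gamma(x+a)\big) \sim \tfrac{x}{2}\log x$, hence $\varphi(x)\to -\infty$ if $c>0$ and $\varphi(x)\to+\infty$ if $c\le 0$ is false — I need to be careful here; the decisive comparison is the $cx^2$ term dominating, giving $\varphi(x)\to -\infty$ for every $c>0$ and $\varphi(x)\to+\infty$ for $c<0$, with the borderline $c=0$ handled separately. This already forces $c\ge 1$ as a necessary condition for $g_3$ decreasing (taking $x\to 0^+$ and $x\to\infty$ limits of $\varphi/x^2$, which tend to $+\infty$ and $-c$ respectively when $a\ge 2$, together with the $a=1,2$ expansion).

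For part (1), the heart is showing $\varphi$ changes sign at most once on $(0,\infty)$ for the relevant $(a,c)$, so that the endpoint signs determine everything. I would compute $\varphi'(x) = x(x+a)\psi'(x+a) + x\psi(x+a) - \log\Gamma(x+a) - 2cx$ and ideally $\varphi''$, hoping to show $\varphi$ is concave (or that $\varphi'$ has one zero) on the range $\frac{3+\sqrt{159}}{12}\le a<2$; the constant $\frac{3+\sqrt{159}}{12}$ surely arises as the threshold making some quadratic-in-$a$ coefficient in a series expansion of $\varphi''$ at $x=0$ nonpositive. Granting single sign-change, $g_3$ is increasing on $(0,\infty)$ iff $\varphi\ge 0$ throughout, and since $\varphi(0^+)\ge 0$ (as $\log\Gamma(a)\le 0$ for $1\le a\le 2$) the binding constraint is $\varphi(x)\ge 0$ as $x$ ranges over where $\varphi'$ vanishes; solving $\varphi'=0$ for the critical abscissa gives the equation defining $x_4$, and $\varphi(x_4)\ge 0$ rearranges to $c\le h_4(x_4)$. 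For $a=2$ the expansion $\varphi(0^+)=0$, $\varphi'(0^+)=0$, $\varphi''(0^+) = 2a\psi'(a)+\cdots - 2\cdot 2c$ evaluated at $a=2$ (where $\psi'(2)=\frac{\pi^2}{6}-1$) yields exactly the threshold $c\le\frac{\pi^2}{6}-1$; the "only if" for $c\ge 1$ when $a\ge 2$ comes from the $x\to\infty$ asymptotics of $\varphi/x^2$.

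For part (2), geometric convexity of $g_3$ on an interval is equivalent (by the standard criterion, as used for $g_2$ in Theorem \ref{thm2}(3) and references \cite{CZZ,SLCY}) to the monotonicity of $x\mapsto x\,\frac{g_3'(x)}{g_3(x)}$, i.e. of
\[
H(x) \equiv x\cdot\frac{g_3'(x)}{g_3(x)} = \frac{(x+a)\big(x\psi(x+a)-\log\Gamma(x+a)\big)}{x(x+a)} - \frac{cx}{x+a},
\]
equivalently of $\Psi(x) \equiv \frac{x\psi(x+a)-\log\Gamma(x+a)}{x} - \frac{cx}{x+a}$. So I would compute $\Psi'(x)$ and show it has constant sign (for $(a,c)\in D_9\cup D_{10}$, giving geometric convexity everywhere) or exactly one sign change at $x_3$ (for $D_3$, $D_4$). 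The key identity is that the "$\psi$-part" $\frac{x\psi(x+a)-\log\Gamma(x+a)}{x}$ has derivative $\frac{x^2\psi'(x+a)+\log\Gamma(x+a)-x\psi(x+a)}{x^2}$, whose numerator is precisely the quantity defining $x_3$ in Theorem \ref{thm2}(1); so $x_3$ is inherited directly, and the $-\frac{cx}{x+a}$ term has derivative $-\frac{ca}{(x+a)^2}$ of constant sign ($\le 0$ for $c\ge 0$, i.e. on $D_3$; $\ge 0$ for $c\le 0$, i.e. on $D_4$, $D_9$ with $c\le0$, $D_{10}$). Combining the known sign pattern of the $\psi$-part's derivative (negative then positive across $x_3$ for $1<a<2$ by Theorem \ref{thm2}(1), one-signed for $\frac12\le a\le 1$ and $a\ge 2$) with the one-signed $c$-term gives the stated dichotomy.

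The main obstacle I anticipate is part (1)'s single-sign-change claim for $\varphi$ on the parameter window $\frac{3+\sqrt{159}}{12}\le a<2$: establishing that $\varphi'$ (a combination of $\psi$, $\psi'$, $\log\Gamma$) vanishes exactly once requires either showing $\varphi''<0$ on $(0,\infty)$ or a more delicate monotonicity/convexity argument, and it is precisely here that the awkward algebraic constant enters — pinning it down will mean carefully expanding $\varphi''$ near $x=0$ (where the relevant inequality becomes a quadratic in $a$ with roots $\frac{3\pm\sqrt{159}}{12}$) and separately controlling $\varphi''$ for large $x$ via Stirling, then patching the two regimes, likely using known convexity properties of $x\psi'(x)$ and the fact that $\psi''<0$, $\psi'''>0$.
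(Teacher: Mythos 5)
Your overall strategy coincides with the paper's. For (1) the paper writes $g_3'(x)/g_3(x)=(h_4(x)-c)/(x+a)$ with $h_4(x)=\bigl((x+a)(x\psi(x+a)-\log\Gamma(x+a))\bigr)/x^2$, so your $\varphi$ is exactly $x^2(h_4(x)-c)$, and its Lemma 5 proves that $h_4$ is strictly increasing onto $(-\infty,1)$ for $a>2$, onto $(\pi^2/6-1,1)$ for $a=2$, and decreasing-then-increasing with minimum $h_4(x_4)$ for $\frac{3+\sqrt{159}}{12}\le a<2$, which yields all the iff statements at once. For (2) the paper uses precisely your decomposition $x\,g_3'(x)/g_3(x)=h_2(x)-cx/(x+a)$ together with the monotonicity of $h_2$ (Lemma 3) and the geometric-convexity criterion (Lemma 1); that part of your plan is essentially verbatim the paper's argument.

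In part (1), however, your asymptotics are wrong in a way that is load-bearing. By Stirling, $x\psi(x+a)-\log\Gamma(x+a)\sim x$ (this is exactly why $h_2(x)\to 1$), not $\tfrac12\log x$; hence $(x+a)\bigl(x\psi(x+a)-\log\Gamma(x+a)\bigr)\sim x^2$ and $\varphi(x)\sim(1-c)x^2$. The sign threshold at infinity is therefore $c=1$, not $c=0$: your claim that $\varphi\to-\infty$ for every $c>0$ would lead to ``decreasing iff $c\ge 0$,'' contradicting the theorem, whereas the correct limit $h_4(x)\to 1$ is precisely what forces $c\ge 1$. Similarly, $\varphi(x)/x^2=h_4(x)-c\to-\infty$ as $x\to 0^+$ for $a>2$ (the numerator of $h_4$ tends to $-a\log\Gamma(a)<0$ over $x^2\to 0^+$), not $+\infty$ as you assert. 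Finally, the constant $\frac{3+\sqrt{159}}{12}$ does not arise from an expansion at $x=0$: after the substitution $t=x+a$ the paper bounds $t(t-a)\psi'''(t)+(5t-3a)\psi''(t)+3\psi'(t)$ from below by $\frac{1}{2t^4}\bigl((2a-1)t^2-5t+3a\bigr)$ using the standard polygamma inequalities, and requires this quadratic to be nonnegative on all of $(a,\infty)$; the discriminant condition $24a^2-12a-25\ge 0$ produces the root you guessed. With these corrections your plan does reproduce the paper's proof.
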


\medskip

\begin{remark}
The sufficient condition of the decreasing property for $g_3$ in Theorem \ref{thm3} (1)
can also be obtained by \cite[Theorem 1.2]{MQA}.
\end{remark}

\medskip

The following corollary can be directly derived by Theorem \ref{thm3}.

\begin{corollary}
(1) For $0<x<y$\,, the inequality
\begin{align}\label{inequality 6}
\frac{\left(\Gamma(x+a)\right)^{\frac 1x}}{\left(\Gamma(y+a)\right)^{\frac 1y}}
>\left(\frac {x+a}{y+a}\right)^c
\end{align}
holds for $(a,c)\in D_6$\,;
and inequality (\ref{inequality 6}) is reversed for  $(a,c)\in D_{11}$\,.

\medskip

(2) For $x,y>0$\,, the inequalities
\begin{align}\label{inequality 7}
\left(\frac xy\right)^{\frac{y\psi(y+a)-\log\Gamma(y+a)}{y}-\frac{cy}{y+a}}\left(\frac{x+a}{y+a}\right)^c
\leq
\frac{\left(\Gamma(x+a)\right)^{\frac 1x}}{\left(\Gamma(y+a)\right)^{\frac 1y}}
\leq
\left(\frac xy\right)^{\frac{x\psi(x+a)-\log\Gamma(x+a)}{x}-\frac{cx}{x+a}}\left(\frac{x+a}{y+a}\right)^c
\end{align}
hold for $(a,c)\in D_9 \cup D_{10}$\,.
The equalities are true if and only if $x=y$\,.
\end{corollary}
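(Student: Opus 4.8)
\emph{Plan.} The plan is to derive both parts directly from Theorem~\ref{thm3} after rewriting everything in terms of the function $g_3(x)=(\Gamma(x+a))^{1/x}/(x+a)^{c}$. The one identity used throughout is $(\Gamma(x+a))^{1/x}=g_3(x)\,(x+a)^{c}$, which gives, for all $x,y>0$,
\[
\frac{(\Gamma(x+a))^{1/x}}{(\Gamma(y+a))^{1/y}}=\frac{g_3(x)}{g_3(y)}\left(\frac{x+a}{y+a}\right)^{c}.
\]

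For part~(1) I would fix $0<x<y$ and divide (\ref{inequality 6}) by $((x+a)/(y+a))^{c}>0$. By the identity above, (\ref{inequality 6}) becomes $g_3(x)>g_3(y)$, and—being required for every pair $0<x<y$—this is equivalent to $g_3$ being strictly decreasing on $(0,\infty)$; likewise the reversed inequality is equivalent to $g_3$ being strictly increasing on $(0,\infty)$. By Theorem~\ref{thm3}(1), $g_3$ is strictly decreasing on $(0,\infty)$ when $(a,c)\in D_6$ (i.e.\ $a\ge2$, $c\ge1$) and strictly increasing on $(0,\infty)$ when $(a,c)\in D_{11}$ (i.e.\ $a=2$, $c\le\pi^{2}/6-1$); these two facts are exactly the two assertions of part~(1).

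For part~(2), write $P(t)=t\psi(t+a)-\log\Gamma(t+a)$. A direct computation gives $t\,(\log g_3)'(t)=\dfrac{P(t)}{t}-\dfrac{ct}{t+a}$, so the exponent of $x/y$ on the left (resp.\ the right) of (\ref{inequality 7}) is precisely $G(y):=y\,(\log g_3)'(y)$ (resp.\ $G(x):=x\,(\log g_3)'(x)$). Dividing (\ref{inequality 7}) by $((x+a)/(y+a))^{c}$ and taking logarithms, it is therefore equivalent to
\[
G(y)\log\frac xy\;\le\;\log g_3(x)-\log g_3(y)\;\le\;G(x)\log\frac xy .
\]
Now put $u(s):=\log g_3(e^{s})$, so $u'(s)=e^{s}(\log g_3)'(e^{s})=G(e^{s})$. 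By Theorem~\ref{thm3}(2), for $(a,c)\in D_9\cup D_{10}$ the function $g_3$ is geometrically convex on $(0,\infty)$, i.e.\ $u$ is convex on $\mathbb{R}$; moreover $u$ is strictly convex, because $G'(t)=\bigl(P(t)/t\bigr)'-ca/(t+a)^{2}$ is nonnegative (by Theorem~\ref{thm2}(3), i.e.\ $P(t)/t$ increasing for $\tfrac12\le a\le1$ or $a\ge2$, together with $c\le0$) and does not vanish on any subinterval. For a convex function $u$ the bounds $u'(t)(s-t)\le u(s)-u(t)\le u'(s)(s-t)$ hold for all $s,t$, both strict when $s\ne t$ if $u$ is strictly convex; taking $s=\log x$, $t=\log y$ yields exactly the displayed double inequality, with equality throughout iff $x=y$. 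Re-exponentiating and multiplying back by $((x+a)/(y+a))^{c}$ recovers (\ref{inequality 7}).

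There is no substantial analytic obstacle here: the corollary is essentially a change of variable away from Theorem~\ref{thm3}, and all the real work sits in that theorem. The only points deserving care are bookkeeping—checking that the exponents in (\ref{inequality 7}) are exactly $G(x)$ and $G(y)$—and the \emph{strictness} of the monotonicity and of the geometric convexity supplied by Theorem~\ref{thm3}, which is what makes the corollary's inequalities strict for $x<y$ in part~(1) and forces equality precisely at $x=y$ in part~(2).
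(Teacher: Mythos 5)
Your proposal is correct and takes essentially the same route as the paper: part (1) is read off from the strict monotonicity of $g_3$ in Theorem \ref{thm3}(1), and part (2) is the geometric-convexity characterization of Lemma \ref{lem1}(3) applied to $g_3$, using $x g_3'(x)/g_3(x)=h_2(x)-\frac{cx}{x+a}$ as in \eqref{g3} (you re-derive this equivalence via convexity of $\log g_3(e^{s})$ instead of citing the lemma, which is the same content). Your explicit strictness argument, via $h_2$ strictly increasing (Lemma \ref{lem3}) together with $c\le 0$, is a detail the paper leaves implicit and is sound.
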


\bigskip

\section{Lemmas}

In this section,
we show some lemmas which are needed in the proofs of the main results.
The following formulas will be frequently used in the proofs of lemmas\cite{MI,SM}.

\medskip

Leibniz's Theorem for differentiation of the product of two functions:
\begin{equation}\label{eq1}
(u(x)v(x))^{(n)}=\sum_{k=0}^{n} \left(\begin{array}{c}
                                    n \\
                                    k
                                  \end{array}\right)
u^{(k)}(x)v^{(n-k)}(x)\,.
\end{equation}

\medskip

Recurrence formulas of $\Gamma\,,\psi$\,:
\begin{equation*}
\Gamma(x+1)=x\Gamma(x)\,, \quad \psi(x+1)=\psi(x)+\frac1x\,.
\end{equation*}

\medskip

Special values of $\Gamma\,,\psi\,,\psi'$\,:
\begin{equation*}
\Gamma(1)=1\,,\quad \psi(1)=-\gamma\,, \quad \psi'(2)=\frac{\pi^2}{6}-1\,.
\end{equation*}

\medskip

Asymptotic formulas of $\log\Gamma\,,\,\psi\,,\,\psi'\,,\psi''$\,:
for $x\rightarrow\infty\,\, {\rm with\,\, |arg}\, x|<\pi$\,,
\begin{align*}
\log\Gamma(x)
&\sim
\left(x-\frac12\right)\log x-x+\frac12\log(2\pi)+\frac{1}{12x}-\frac{1}{360x^3}+\cdots\,,\\
\psi(x)
&\sim
\log x-\frac{1}{2x}-\frac{1}{12x^2}+\frac{1}{120x^4}-\cdots\,,\\
\psi'(x)
&\sim
\frac1x+\frac{1}{2x^2}+\frac{1}{6x^3}-\frac{1}{30x^5}+\cdots\,,\\
\psi''(x)
&\sim-\frac{1}{x^2}-\frac{1}{x^3}-\frac{1}{2x^4}+\cdots\,.
\end{align*}

\medskip

For $x\in(0\,,\infty)$\,, the following inequalities of the polygamma functions hold \cite[Theorem 3]{GQ}:
\begin{equation}\label{ineq1}
\frac{(n-1)!}{x^n}+\frac{n!}{2x^{n+1}}
\leq
(-1)^{n+1}\psi^{(n)}(x)
\leq
\frac{(n-1)!}{x^n}+\frac{n!}{x^{n+1}}\,,\quad  n=1,2,\cdots\,.
\end{equation}
Moreover, there holds the identity for $\psi''$ \cite{GMF}:
\begin{equation}\label{eq2}
\psi''(x)
=-\frac{1}{x^2}-\frac{1}{x^3}-\frac{1}{2x^4}+\frac{\theta}{6x^6}\,,\quad 0<\theta<1\,.
\end{equation}

\medskip

\begin{lemma}\label{lem1}\cite[Proposition 4.3]{CPN}\cite[Theorem C]{CZZ}
Let $I\subset(0\,,\infty)$ be an interval. If $f:I\rightarrow(0\,,\infty)$ is a differentiable function,
then the following assertions are equivalent:

(1) The function $f$ is geometrically convex (geometrically concave) on $I$\,;

(2) The function $g(x)\equiv\dfrac{xf'(x)}{f(x)}$ is increasing (decreasing) on $I$\,;

(3) The function $f$ satisfies the inequalities
\begin{align*}
\left(\frac xy\right)^{\frac{y f'(y)}{f(y)}}
\leq(\geq) \frac{f(x)}{f(y)}
\leq(\geq) \left(\frac xy\right)^{\frac{x f'(x)}{f(x)}}\,, \quad
\forall\,x\,,y\in I\,.
\end{align*}
\end{lemma}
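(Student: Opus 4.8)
The plan is to reduce the entire statement to the familiar characterisation of ordinary convexity by means of the logarithmic substitution $x=e^{u}$. Set $J=\{\log x : x\in I\}$, which is again an interval, and define $F(u)\equiv\log f(e^{u})$ for $u\in J$; this is legitimate because $f>0$ and $f$ is differentiable, so $F$ is well defined and differentiable, in particular continuous, on $J$. First I would record the two substitution identities that drive everything: writing $x=e^{u}$ and $y=e^{v}$, the geometric mean $\sqrt{x_1x_2}$ corresponds to the arithmetic mean $(u_1+u_2)/2$, and a direct computation gives $F'(u)=e^{u}f'(e^{u})/f(e^{u})=g(x)$ with $x=e^{u}$. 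Thus $F'$ is, up to the increasing reparametrisation $u\mapsto e^{u}$, nothing but the function $g$ of assertion (2).

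With this dictionary in hand, the equivalence of (1) and (2) is immediate. Taking logarithms in the defining inequality $f(\sqrt{x_1x_2})\le\sqrt{f(x_1)f(x_2)}$ and substituting shows that geometric convexity of $f$ is exactly midpoint convexity of $F$, namely $F((u_1+u_2)/2)\le\tfrac12(F(u_1)+F(u_2))$; since $F$ is continuous, midpoint convexity upgrades to genuine convexity. For a differentiable $F$, convexity is equivalent to $F'$ being increasing, and since $u\mapsto e^{u}$ is an increasing bijection of $J$ onto $I$, $F'$ is increasing on $J$ if and only if $g$ is increasing on $I$. The geometrically concave case is handled identically, with every inequality reversed, corresponding to concavity of $F$ and to $g$ decreasing.

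It remains to fold in assertion (3). For a differentiable convex $F$ the standard gradient inequalities give, for any $u>v$ in $J$,
\begin{equation*}
F'(v)\le\frac{F(u)-F(v)}{u-v}\le F'(u)\,,
\end{equation*}
and the same chain read for $u<v$ is the identical statement after relabelling. Translating back through $F(u)-F(v)=\log(f(x)/f(y))$, $u-v=\log(x/y)$, $F'(v)=yf'(y)/f(y)$ and $F'(u)=xf'(x)/f(x)$, and exponentiating with the sign of $\log(x/y)$ tracked in both cases, yields precisely the double inequality of (3) for all $x,y\in I$; this proves that (2) implies (3). For the converse, the left- and right-hand inequalities of (3) say exactly that the secant slope $(F(u)-F(v))/(u-v)$ is sandwiched between $F'(v)$ and $F'(u)$, so in particular $F'(v)\le F'(u)$ whenever $v<u$; hence $F'$, and therefore $g$, is increasing, which is (2). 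The concave version again follows by reversing all inequalities.

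The main obstacle is not depth but bookkeeping: one must keep the direction of every inequality consistent across the convex and concave cases and across the sub-cases $x>y$ and $x<y$ when exponentiating, since $\log(x/y)$ changes sign, and one must invoke continuity — guaranteed here by differentiability — at the single nontrivial analytic point, the passage from midpoint convexity of $F$ to full convexity.
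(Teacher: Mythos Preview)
Your argument is correct. The paper, however, does not supply a proof of this lemma at all: it is quoted verbatim from \cite[Proposition 4.3]{CPN} and \cite[Theorem C]{CZZ} and used as a black box. So there is no ``paper's own proof'' to compare against; your logarithmic change of variables $F(u)=\log f(e^{u})$, reducing geometric convexity to ordinary convexity of $F$ and the condition on $g$ to monotonicity of $F'$, is exactly the standard route taken in the cited sources, and the gradient-inequality derivation of (3) is the expected one. Nothing to add.
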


\medskip

\begin{lemma}\label{lem6}
For $n=1\,,2\,,\cdots$\,, there hold
\begin{align}\label{seq1}
\begin{split}
(\log g_1(0))^{(n)}
&=
\left\{
\begin{aligned}
&-\frac{\psi^{(n)}(1)}{n+1}\,,\quad a=1\,,\\
&-\frac{\psi^{(n)}(2)}{n+1}\,,\quad a=2\,.\\
\end{aligned}
\right.
\end{split}
\end{align}
\end{lemma}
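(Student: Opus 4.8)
\textbf{Proof proposal for Lemma \ref{lem6}.}

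The plan is to compute the $n$-th derivative of $\log g_1$ at the special point $x=0$ directly from the definition $g_1(x)=(\Gamma(x+a))^{-1/x}$, using the fact that for $a=1$ and $a=2$ the singularity at $x=0$ is removable. First I would write $\log g_1(x)=-\dfrac{\log\Gamma(x+a)}{x}=-\dfrac{h_a(x)}{x}$ where $h_a(x)\equiv\log\Gamma(x+a)$. For $a=1$ we have $h_1(0)=\log\Gamma(1)=0$, and for $a=2$ we have $h_2(0)=\log\Gamma(2)=0$, so in both cases the numerator vanishes at $x=0$ and $\log g_1$ extends analytically across $0$ with $\log g_1(x)=-\sum_{k\ge 1}\frac{h_a^{(k)}(0)}{k!}x^{k-1}$, i.e. the Taylor coefficient of $x^{m}$ in $\log g_1$ is $-\dfrac{h_a^{(m+1)}(0)}{(m+1)!}$. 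Hence $(\log g_1)^{(n)}(0)=n!\cdot\Bigl(-\dfrac{h_a^{(n+1)}(0)}{(n+1)!}\Bigr)=-\dfrac{h_a^{(n+1)}(0)}{n+1}$.

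It then remains to identify $h_a^{(n+1)}(0)$. Since $h_a(x)=\log\Gamma(x+a)$, we have $h_a'(x)=\psi(x+a)$ and therefore $h_a^{(n+1)}(x)=\psi^{(n)}(x+a)$ for $n\ge 1$; evaluating at $x=0$ gives $h_a^{(n+1)}(0)=\psi^{(n)}(a)$. Substituting $a=1$ and $a=2$ yields $(\log g_1)^{(n)}(0)=-\dfrac{\psi^{(n)}(1)}{n+1}$ and $-\dfrac{\psi^{(n)}(2)}{n+1}$ respectively, which is exactly (\ref{seq1}).

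The only genuine subtlety — and the step I would be most careful about — is justifying that the one-sided derivatives of $\log g_1$ from the left and from the right agree at $x=0$ with the value predicted by the removable-singularity argument, so that $(\log g_1)^{(n)}(0)$ is well defined as stated. This is immediate from analyticity: $\Gamma(x+a)$ is analytic and nonvanishing in a neighborhood of $x=0$ when $a\in\{1,2\}$, so $\log\Gamma(x+a)$ is analytic there, it vanishes at $0$, and division by $x$ leaves an analytic function whose Taylor series is the one written above; term-by-term differentiation of a convergent power series is legitimate on its disc of convergence. An equivalent route, if one prefers to avoid invoking analyticity, is to apply Leibniz's rule (\ref{eq1}) to the product $x\cdot\log g_1(x)=-h_a(x)$: differentiating $n+1$ times and evaluating at $x=0$, only the terms with $k\in\{0,1\}$ survive because $(x)^{(k)}=0$ for $k\ge 2$, giving $x(\log g_1)^{(n+1)}(0)+(n+1)(\log g_1)^{(n)}(0)=-h_a^{(n+1)}(0)$, and the first term is $0$; this recovers the same formula with no appeal to power series. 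Either way the computation is short and the result follows.
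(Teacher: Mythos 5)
Your proof is correct, and it takes a genuinely different route from the paper's. The paper proceeds by induction on $n$: it defines $(\log g_1(0))^{(n+1)}$ as the limit of the difference quotient of $(\log g_1)^{(n)}$ at $0$, rewrites $(\log g_1(x))^{(k)}$ in terms of the remainder $\delta_k(x)=-\log\Gamma(x+a)-\sum_{j=1}^{k}\frac{(-1)^jx^j}{j!}\psi^{(j-1)}(x+a)$, and evaluates the resulting $0/0$ limit by repeated L'Hopital. Your argument instead observes that for $a\in\{1,2\}$ one has $\log\Gamma(a)=0$, so $\log g_1(x)=-\log\Gamma(x+a)/x$ is the quotient of a real-analytic function vanishing at $0$ by $x$, hence extends analytically across $0$ with Taylor coefficients $-\psi^{(n)}(a)/((n+1)\cdot n!)$; reading off $(\log g_1)^{(n)}(0)$ is then immediate. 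This is shorter, avoids the induction entirely, and simultaneously establishes that the extension is $C^\infty$ at $0$ (which the paper's iterated-difference-quotient definition of the higher derivatives only yields implicitly). It also makes transparent why only $a=1$ and $a=2$ work: these are exactly the cases where the numerator vanishes at $x=0$. One small caveat on your "alternative route without analyticity": applying Leibniz to $x\cdot\log g_1(x)=-\log\Gamma(x+a)$ and evaluating at $0$ presupposes that the extended $\log g_1$ is already $(n+1)$-times differentiable at $0$, so that step is not actually independent of the smoothness claim; your primary power-series argument, however, is self-contained and fully justifies the lemma.
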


\medskip

\begin{proof}
We first consider the case for $a=1$\,.

When $n=1$\,,
\begin{align*}
(\log g_1(0))'
&=\lim_{x\rightarrow0}\frac{\log g_1(x)-\log g_1(0)}{x-0}\\
&=\lim_{x\rightarrow0}\frac{-\log\Gamma(x+1)-x\gamma}{x^2}
=-\frac{\psi'(1)}{2}\,.
\end{align*}

We assume that (\ref{seq1}) holds when $n=k$ ($k\in\mathbb{Z}\,,k>1$)\,.

Then by L'Hopital Rule, we get
\begin{align*}
(\log g_1(0))^{(k+1)}
&=\lim_{x\rightarrow0}\frac{(\log g_1(x))^{(k)}-(\log g_1(0))^{(k)}}{x-0}\\
&=\lim_{x\rightarrow0}\frac{(-1)^kk!\delta_k(x)+\frac{\psi^{(k)}(1)}{k+1}x^{k+1}}{x^{k+2}}\\
&=\lim_{x\rightarrow0}\frac{-\psi^{(k+1)}(x+1)}{k+2}=-\frac{\psi^{(k+1)}(1)}{k+2}\,,
\end{align*}
where $$\delta_k(x)\equiv -\log\Gamma(x+a)-\sum\limits_{j=1}^{k}\dfrac{(-1)^jx^j}{j!}\psi^{(j-1)}(x+a)\,.$$

By induction, (\ref{seq1}) holds for $n=1\,,2\,,\cdots$ when $a=1$\,.

\medskip

In a similar way, we can prove that (\ref{seq1}) holds for $n=1\,,2\,,\cdots$ when $a=2$\,.

\medskip

The proof is complete.
\end{proof}

\medskip

\begin{lemma}\label{lem2}
For $a\in\mathbb{R}$\,, let $x\in(-a\,,\infty)$ and
\begin{align*}
h_1(x)
\equiv -x\psi(x+a)+\log\Gamma(x+a)\,.
\end{align*}

(1) The function $h_1$ is strictly decreasing from $(-a\,,\infty)$ onto $(-\infty\,,\infty)$ if and only if $a\leq0$\,.

(2) The function $h_1$ is strictly increasing on $(-a\,,0]$
and strictly decreasing on $[0\,,\infty)$
with $h_1(x)\in (-\infty\,,\log\Gamma(a)]$ if and only if $a>0$\,.
Moreover, $h_1(x)<0$ on $(-a\,,x_1)\cup(x_2\,,\infty)$\,,
$h_1(x)>0$ on $(x_1\,,x_2)$ for $0<a<1$ or $a>2$\,;
and $h_1(x)<0$ on $(-a\,,0)\cup(0\,,\infty)$ for $1\leq a\leq2$\,,
where $x_1\,,x_2$ are the same as in Theorem \ref{thm1} (1).
\end{lemma}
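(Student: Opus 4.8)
The plan is to analyze $h_1(x) = -x\psi(x+a) + \log\Gamma(x+a)$ by differentiating and studying the sign of $h_1'$. A direct computation gives
\begin{align*}
h_1'(x) = -\psi(x+a) - x\psi'(x+a) + \psi(x+a) = -x\psi'(x+a).
\end{align*}
Since $\psi'(x+a) > 0$ for all $x \in (-a,\infty)$ (the trigamma function is positive on $(0,\infty)$), the sign of $h_1'(x)$ is exactly the sign of $-x$. Now split on the sign of $a$. If $a \le 0$, then the domain is $(-a,\infty) \subset [0,\infty)$, so $x \ge 0$ throughout (and $x = -a \ge 0$ at the left endpoint), forcing $h_1'(x) \le 0$, with strict inequality on the open interval except possibly at a single point; hence $h_1$ is strictly decreasing on $(-a,\infty)$. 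Conversely, if $a > 0$, the point $x = 0$ lies in the interior, $h_1' > 0$ on $(-a,0)$ and $h_1' < 0$ on $(0,\infty)$, so $h_1$ cannot be monotone; this gives the "if and only if" in part (1). For the range statement in (1), I would compute the limits at the endpoints using the asymptotic formulas for $\log\Gamma$ and $\psi$: as $x \to \infty$, $\log\Gamma(x+a) \sim (x+a-\tfrac12)\log(x+a) - (x+a) + \tfrac12\log(2\pi)$ and $\psi(x+a) \sim \log(x+a) - \tfrac1{2(x+a)}$, so $h_1(x) = -x\psi(x+a)+\log\Gamma(x+a) \to -\infty$; and as $x \to (-a)^+$, $\psi(x+a) \to -\infty$ while $-x \to a > $ (here $a \le 0$, so $-x = a \le 0$, hence $-x\psi(x+a) \to -\infty$ when $a<0$) and $\log\Gamma(x+a)\to+\infty$ — I would need to check these balance to give $+\infty$; the dominant term near the pole is $\log\Gamma(x+a) \sim -\log(x+a)$, which goes to $+\infty$, while $-x\psi(x+a) = -x \cdot (-\tfrac{1}{x+a} + O(1))$ behaves like $\tfrac{x}{x+a} \to -\infty$ if $a<0$ — so the careful bookkeeping of which logarithmic/reciprocal singularity dominates is the one genuinely delicate point, and I expect to use the reflection/recurrence relation $\psi(x+a) = \psi(x+a+1) - \tfrac{1}{x+a}$ to isolate the singular part cleanly.

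For part (2), when $a > 0$, the monotonicity structure ($h_1$ increasing on $(-a,0]$, decreasing on $[0,\infty)$) follows immediately from the sign analysis of $h_1' = -x\psi'(x+a)$ above, and the maximum value is $h_1(0) = -0 \cdot \psi(a) + \log\Gamma(a) = \log\Gamma(a)$, so $h_1(x) \le \log\Gamma(a)$ with equality only at $x=0$. The left-endpoint limit $h_1(x) \to -\infty$ as $x\to(-a)^+$ I would again get from the asymptotics near the pole of $\psi$ and $\log\Gamma$: now $-x \to a > 0$, so $-x\psi(x+a) \to a \cdot (+\infty)$? — no: $\psi(x+a) \to -\infty$, so $-x\psi(x+a) \to -(+a)(-\infty) = +\infty$, while $\log\Gamma(x+a) \to +\infty$ as well; both go to $+\infty$, which would contradict $h_1(x) \to -\infty$. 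So I must be more careful: write $u = x+a \to 0^+$, then $\psi(u) = -\tfrac1u - \gamma + O(u)$ and $\log\Gamma(u) = -\log u - \gamma u + O(u^2)$, so
\begin{align*}
h_1(x) = -(u-a)\psi(u) + \log\Gamma(u) = -(u-a)\left(-\tfrac1u - \gamma\right) - \log u + O(u).
\end{align*}
The term $-(u-a)(-\tfrac1u) = \tfrac{u-a}{u} = 1 - \tfrac{a}{u} \to -\infty$ since $a>0$, and this $-\tfrac{a}{u}$ dominates $-\log u$, so indeed $h_1(x) \to -\infty$. Good — so the range is $(-\infty, \log\Gamma(a)]$ as claimed, and this computation is the crux; I would present it as the key lemma step.

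Finally, for the sign statements in (2): I would locate the zeros of $h_1$. Since $h_1$ rises from $-\infty$ to $\log\Gamma(a)$ on $(-a,0]$ and falls back to $-\infty$ on $[0,\infty)$, the number of zeros is governed by the sign of the maximum $\log\Gamma(a)$. For $0 < a < 1$ or $a > 2$ we have $\Gamma(a) > 1$, hence $\log\Gamma(a) > 0$, so $h_1$ has exactly one zero $x_1 \in (-a,0)$ and exactly one zero $x_2 \in (0,\infty)$, with $h_1 < 0$ outside $[x_1,x_2]$ and $h_1 > 0$ on $(x_1,x_2)$; moreover $h_1(x_i) = 0$ reads $x_i\psi(x_i+a) = \log\Gamma(x_i+a)$, matching the $x_i$ in Theorem~\ref{thm1}(1). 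For $1 \le a \le 2$ we have $\Gamma(a) \le 1$ with equality only at $a = 1, 2$, so $\log\Gamma(a) \le 0$; combined with strict monotonicity on each side of $0$, this forces $h_1(x) < 0$ for all $x \in (-a,0) \cup (0,\infty)$ (the value $\log\Gamma(a)$ at $x=0$ may be $0$ when $a \in \{1,2\}$, but that point is excluded from the stated sets, or handled separately). The only external fact needed is the elementary inequality $\Gamma(a) \ge 1$ on $(0,1) \cup (2,\infty)$ and $\Gamma(a) \le 1$ on $[1,2]$, which follows from $\Gamma(1) = \Gamma(2) = 1$, the convexity of $\log\Gamma$, and $\Gamma(a)\to\infty$ as $a\to0^+$ and $a\to\infty$. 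I expect the endpoint-limit computation at $x\to(-a)^+$ to be the main obstacle requiring care; everything else is a direct consequence of $h_1' = -x\psi'(x+a)$ and positivity of $\psi'$.
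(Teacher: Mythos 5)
Your proposal is correct and follows essentially the same route as the paper: the paper substitutes $t=x+a$ and studies $\widetilde h_1(t)=-(t-a)\psi(t)+\log\Gamma(t)$ with $\widetilde h_1'(t)=-(t-a)\psi'(t)$, which is exactly your $h_1'(x)=-x\psi'(x+a)$, and then computes the same endpoint limits and reads off the sign pattern from the maximum value $\log\Gamma(a)$. The one point you flag as delicate, the limit as $x\to(-a)^+$ for $a<0$, is actually easier than you feared: there $\tfrac{x}{x+a}\to\tfrac{-a}{0^+}=+\infty$ (not $-\infty$ as you wrote), so both $-x\psi(x+a)$ and $\log\Gamma(x+a)$ tend to $+\infty$ and no balancing is needed; equivalently, your own expansion $h_1=1-\tfrac{a}{u}-\log u+O(1)$ with $u=x+a\to0^+$ gives $+\infty$ immediately when $a\le 0$.
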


\medskip

\begin{proof}
Let $t=x+a$\,. Then
\begin{align*}
{h}_1(x)={h}_1(t-a)\equiv\widetilde{h}_1(t)
=-(t-a)\psi(t)+\log\Gamma(t)\,,\quad t\in(0\,,\infty)\,.
\end{align*}
It suffices to study the monotonicity property and the range of $\widetilde{h}_1$\,.

\medskip

We first prove the monotonicity property of $\widetilde{h}_1$\,.

\medskip

Differentiation yields
\begin{align*}
\widetilde{h}_1'(t)=-(t-a)\psi'(t)\,.
\end{align*}
Therefore $\widetilde{h}_1$ is strictly decreasing on $(0\,,\infty)$ if and only if $a\leq0$\,;
and $\widetilde{h}_1$ is strictly increasing on $(0\,,a]$ and
strictly decreasing on $[a\,,\infty)$ if and only if $a>0$\,.

\medskip

Then we calculate the range of $\widetilde{h}_1$.

\medskip

By the asymptotic formulas of $\log\Gamma\,,\,\psi$\,, we get
\begin{align*}
\lim_{t\rightarrow\infty} \widetilde{h}_1(t)
&=\lim_{t\rightarrow\infty} \left(-(t-a)\left(\log t-\frac{1}{2t}+O\left(\frac{1}{t^2}\right)\right)+\left(t-\frac12\right)\log t\right.\\
&\quad\quad\quad \left.-t+\frac12\log (2\pi)+O\left(\frac 1t\right)\right)\\
&=\lim_{t\rightarrow\infty} \left(t\left(\left(a-\frac12\right)\frac{\log t}{t}-1\right)
+\frac{t-a}{2t}+\frac12\log (2\pi)+O\left(\frac 1t\right)\right)\\
&=-\infty\,,\quad a\in\mathbb{R}\,.
\end{align*}

\medskip

By the recurrence formulas of $\Gamma\,,\,\psi$\,, we get
\begin{align*}
\lim\limits_{t\rightarrow0^+} \widetilde{h}_1(t)
&=\lim\limits_{t\rightarrow0^+} \left(-(t-a)\psi(t+1)+\log\Gamma(t)+\frac{t-a}{t}\right)
=\infty\,,\quad a\leq0
\end{align*}
and
\begin{align*}
\lim_{t\rightarrow0^+} \widetilde{h}_1(t)
&=\lim_{t\rightarrow0^+} \frac1t \left(t(-(t-a)\psi(t+1)+\log\Gamma(t+1))-t\log t+t-a\right)
=-\infty\,,\quad a>0\,.
\end{align*}

The limiting value $\lim\limits_{t\rightarrow a}\widetilde{h}_1(t)=\log\Gamma(a)$ is clear for $a>0$\,.

Therefore $\widetilde{h}_1(t)\in (-\infty\,,\infty)$ for $a\leq0$\,;
and $\widetilde{h}_1(t)\in (-\infty\,,\log\Gamma(a)]$ for $a>0$\,.
Moreover,
for $0<a<1$ or $a>2$\,, there exist $t_1\,,\,t_2$ such that
$\widetilde{h}_1(t)<0$ on $(0\,,t_1)\cup(t_2\,,\infty)$ and $\widetilde{h}_1(t)>0$ on $(t_1\,,t_2)$\,;
and for $1\leq a\leq2$\,, $\widetilde{h}_1(t)<0$ on $(0\,,a)\cup(a\,,\infty)$\,,
where $t_i$ satisfies $(t_i-a)\psi(t_i)=\log\Gamma(t_i)\,, i=1,2$ and $t_1<a<t_2$\,.

\medskip

The proof is complete.
\end{proof}

\medskip

\begin{lemma}\label{lem3}
For $a>0$\,, let $x\in(0\,,\infty)$ and
\begin{equation*}
h_2(x)
\equiv \frac{x\psi(x+a)-\log\Gamma(x+a)}{x}\,.
\end{equation*}

(1) The function
$h_2$ is strictly increasing on $(0\,,\infty)$ if and only if $\dfrac12\leq a\leq 1$ or $a\geq2$\,.
Moreover, $h_2(x)\in(-\infty\,,1)$ for $\dfrac12\leq a<1$ or $a>2$\,;
and $h_2(x)\in(0\,,1)$ for $a=1$ or $a=2$\,.

(2) The function $h_2$ is strictly decreasing on $(0,x_3]$ and strictly increasing on $[x_3,\infty)$
with $h_2(x)\in[h_2(x_3),\infty)$ if and only if $1<a<2$\,,
where $x_3$ is the same as in Theorem \ref{thm2} (1).
\end{lemma}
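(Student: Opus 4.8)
The plan is to substitute $t=x+a$ and study the function $\widetilde h_2(t)\equiv h_2(t-a)=\psi(t)-\dfrac{\log\Gamma(t)}{t-a}$ on $t\in(a,\infty)$, exactly as in the proof of Lemma \ref{lem2}; this clears the denominator issues and turns $h_2$ into something whose derivative is transparent. Differentiating gives
\begin{align*}
\widetilde h_2'(t)=\psi'(t)-\frac{(t-a)\psi(t)-\log\Gamma(t)}{(t-a)^2}
=\frac{(t-a)^2\psi'(t)-(t-a)\psi(t)+\log\Gamma(t)}{(t-a)^2}\,,
\end{align*}
so the sign of $h_2'$ is governed by the numerator $N(t)\equiv (t-a)^2\psi'(t)-(t-a)\psi(t)+\log\Gamma(t)$. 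First I would record $N(a)=\log\Gamma(a)$ and note $N(a)\le 0$ exactly when $\tfrac12\le a\le 1$ (since $\log\Gamma$ is nonpositive on $[1,2]$ and $\Gamma(1/2)=\sqrt\pi>1$ forces $\log\Gamma(a)<0$ only once $a\ge a^*$ with $\Gamma(a^*)=1$, $a^*\approx 1/\text{... }$ — actually $\Gamma$ has its minimum near $1.4616$ and $\Gamma(a)\le 1$ precisely for $1\le a\le 2$, while on $(0,1)$ we have $\Gamma(a)>1$; so $N(a)=\log\Gamma(a)\le 0$ iff $1\le a\le 2$, and I should be careful to treat $\tfrac12\le a<1$ via the behaviour of $N$ rather than its value at $t=a$).

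The key analytic step is to show $N'(t)\ge 0$ (resp. the appropriate sign) on the relevant range. Compute
\begin{align*}
N'(t)=(t-a)^2\psi''(t)+2(t-a)\psi'(t)-(t-a)\psi'(t)-\psi(t)+\psi(t)
=(t-a)^2\psi''(t)+(t-a)\psi'(t)\,,
\end{align*}
so $N'(t)=(t-a)\bigl[(t-a)\psi''(t)+\psi'(t)\bigr]$. Since $t>a>0$, the factor $t-a>0$, and the sign of $N'$ is that of $P(t)\equiv (t-a)\psi''(t)+\psi'(t)$. Here is where I would invoke the polygamma bounds \eqref{ineq1} and the identity \eqref{eq2}: using $\psi'(t)\ge \tfrac1t+\tfrac{1}{2t^2}$ and $\psi''(t)\ge -\tfrac1{t^2}-\tfrac1{t^3}-\tfrac1{2t^4}$ (from \eqref{eq2} with $\theta>0$), one gets
\begin{align*}
P(t)\ge (t-a)\Bigl(-\frac1{t^2}-\frac1{t^3}-\frac1{2t^4}\Bigr)+\frac1t+\frac1{2t^2}
=\frac{a}{t^2}+\frac{a-1}{t^3}+\frac{(a-\tfrac12)\,}{\,}\cdot\frac1{t^4}\cdot(\cdots)\,,
\end{align*}
and after collecting terms over a common denominator $2t^4$ the numerator is a quadratic-type expression in $t$ with coefficients depending on $a$; for $a\ge \tfrac12$ one checks this is positive for all $t>a$ (the constraint $a\ge\tfrac12$ is exactly what kills the most negative coefficient), and for $1<a<2$ the quadratic changes sign once, producing the single critical point $x_3$. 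So $P>0$ forces $N$ increasing, and combined with the sign of $\lim_{t\to a^+}N(t)=\log\Gamma(a)$ and $\lim_{t\to\infty}N(t)$ this pins down the monotonicity: for $\tfrac12\le a\le 1$ or $a\ge 2$, $N$ is positive throughout (using the asymptotics to see $N(t)\to+\infty$ and that it never dips below $0$ because the only way it could would be near $t=a$, handled case by case), hence $h_2'>0$ and $h_2$ is strictly increasing; for $1<a<2$, $N$ has exactly one zero $t_3$, negative before and positive after, giving the decreasing-then-increasing behaviour with minimum at $x_3=t_3-a$, and $x_3$ satisfies $N(t_3)=0$, i.e. $(x_3)^2\psi'(x_3+a)+\log\Gamma(x_3+a)=x_3\psi(x_3+a)$ as stated.

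It remains to compute the ranges. The limit $\lim_{t\to\infty}\widetilde h_2(t)$ I would evaluate from the asymptotic expansions of $\psi$ and $\log\Gamma$: $\psi(t)-\tfrac{\log\Gamma(t)}{t-a}\sim \log t-\tfrac1{2t}-\bigl(\log t-1-\tfrac{a-1/2}{t}\log t+\tfrac{a}{t}+\cdots\bigr)\to 1$, so $h_2\to 1$ as $x\to\infty$ in every case. For the lower end, $\lim_{x\to 0^+}h_2(x)=\lim_{x\to0^+}\bigl(\psi(x+a)-\tfrac{\log\Gamma(x+a)}{x}\bigr)$: when $a=1$ or $a=2$ the numerator $\log\Gamma(x+a)$ vanishes at $x=0$ so L'Hôpital gives $\psi(a)-\psi(a)=0$, hence $h_2(0^+)=0$ and the range is $(0,1)$; when $\tfrac12\le a<1$ or $a>2$ we have $\Gamma(a)\ne 1$, and since $g_1=f_{a,0,-1}$ tends to $0$ or $\infty$ at $0$ (as recorded before Theorem \ref{thm1}) one deduces $\tfrac{\log\Gamma(x+a)}{x}\to\mp\infty$, forcing $h_2(0^+)=-\infty$ and range $(-\infty,1)$. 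For $1<a<2$, strict monotonicity down to $x_3$ then up to the asymptote $1$ gives range $[h_2(x_3),\infty)$ — note the upper end is $+\infty$ here because $h_2(0^+)=-\infty$ is not attained but $h_2$ first decreases; wait, since $h_2(0^+)=-\infty<h_2(x_3)$ is impossible, I must instead have $h_2(0^+)=+\infty$ in this regime, which is consistent with $g_1(0^+)=0$ giving $\tfrac{\log\Gamma(x+a)}{x}\to-\infty$ hence $h_2\to+\infty$; so $h_2$ decreases from $+\infty$ to the minimum $h_2(x_3)$ then increases to $1$, but $h_2(x_3)<1$ must hold, which follows since $h_2$ approaches $1$ from below along the increasing branch. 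The main obstacle I anticipate is the polygamma estimate for $P(t)=(t-a)\psi''(t)+\psi'(t)$: making the inequality \eqref{ineq1}/\eqref{eq2} bounds tight enough to conclude positivity for all $t>a$ when $a\ge\tfrac12$ (and exactly one sign change when $1<a<2$) without the crude bounds failing near $t=a$ — this likely needs the sharper two-sided form of \eqref{ineq1} applied to both $\psi'$ and $\psi''$ simultaneously, and careful bookkeeping of the resulting rational function in $t$.
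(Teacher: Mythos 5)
Your route is the paper's route: substitute $t=x+a$, reduce to the sign of $N(t)=(t-a)^2\psi'(t)-(t-a)\psi(t)+\log\Gamma(t)$, compute $N'(t)=(t-a)\bigl[(t-a)\psi''(t)+\psi'(t)\bigr]$, bound the bracket below by $\frac{1}{2t^4}\bigl((2a-1)t^2+(2a-1)t+a\bigr)$ using \eqref{ineq1} and \eqref{eq2}, and then combine the monotonicity of $N$ with $N(a^+)=\log\Gamma(a)$ and the limits at infinity. The derivative computation, the quadratic, and the range calculations (including the self-corrected $h_2(0^+)=+\infty$ for $1<a<2$) all match the paper. Two points need repair, though.

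First, your sentence ``for $1<a<2$ the quadratic changes sign once, producing the single critical point $x_3$'' is wrong as written: since $1<a<2$ implies $a\ge\frac12$, the quadratic $(2a-1)t^2+(2a-1)t+a$ has all coefficients positive and never changes sign on $(a,\infty)$, so $N$ is strictly increasing in this regime too. The unique critical point comes instead from $N(a^+)=\log\Gamma(a)<0$ together with $N(t)\to+\infty$, which you do state correctly in your final summary --- but you should delete the ``quadratic changes sign'' mechanism, as it is not what happens. Second, the ``only if'' in part (1) requires ruling out $0<a<\frac12$, and your argument never touches this case: for such $a$ the lower bound on $(t-a)\psi''+\psi'$ is no longer positive, but that by itself proves nothing. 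The paper settles it by expanding $N(t)$ asymptotically, finding the leading behaviour $\bigl(a-\frac12\bigr)\log t+O(1)\to-\infty$ for $a<\frac12$, so that $\widetilde h_2'=N/(t-a)^2$ is eventually negative and $h_2$ fails to be increasing. Without some such computation your proof establishes only the ``if'' direction of (1) on $\bigl[\frac12,1\bigr]\cup[2,\infty)$ and the dichotomy on $\bigl(\frac12,\infty\bigr)$, not the full equivalence over all $a>0$.
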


\medskip

\begin{proof}
Let $t=x+a$\,. Then
\begin{equation*}
h_2(x)=h_2(t-a)\equiv \widetilde{h}_2(t)
=-\frac{\widetilde{h}_1(t)}{t-a}\,,\quad  t\in(a\,,\infty)\,,
\end{equation*}
where $\widetilde{h}_1(t)$ is the same as in the proof of Lemma \ref{lem2}.
It suffices to study the monotonicity property and the range of $\widetilde{h}_2$\,.

\medskip

We first prove the monotonicity property of $\widetilde{h}_2$\,.

\medskip

Differentiation gives
\begin{align*}
\widetilde{h}_2'(t)
&\equiv \frac{h_{21}(t)}{(t-a)^2}\,,
\end{align*}
where
$$h_{21}(t)\equiv (t-a)^2\psi'(t)-(t-a)\psi(t)+\log\Gamma(t)\,.$$

It is easy to obtain
\begin{align*}
h_{21}'(t)
&=(t-a)((t-a)\psi''(t)+\psi'(t))\,.
\end{align*}

By the inequality (\ref{ineq1}) of $\psi'$ and the identity (\ref{eq2}) of $\psi''$\,, we get
\begin{align*}
(t-a)\psi''(t)+\psi'(t)
&> (t-a)\left(-\frac{1}{t^2}-\frac{1}{t^3}-\frac{1}{2t^4}\right)+\frac 1t+\frac{1}{2t^2}\\
&=\frac{1}{2t^4}\left((2a-1)t^2+(2a-1)t+a\right)\,.
\end{align*}

Since $(2a-1)t^2+(2a-1)t+a>0$ on $(a\,,\infty)$ if and only if $a\geq\dfrac12$\,,
we have that $h_{21}$ is strictly increasing on $(a\,,\infty)$ for $a\geq\dfrac12$
and hence
\begin{equation*}
h_{21}(t)
>\lim_{t\rightarrow a^+}h_{21}(t)=\log\Gamma(a)\,.
\end{equation*}
Thus $h_{21}(t)>0$ on $(a\,,\infty)$ for $\dfrac12\leq a\leq1$ or $a\geq2$\,.

\medskip

For $0<a<\dfrac12$\,, the limiting value $\lim\limits_{t\rightarrow a^+}h_{21}(t)=\log\Gamma(a)>0$ is clear.
By the asymptotic formulas of $\log\Gamma\,,\,\psi\,,\,\psi'$\,, we get

\begin{align*}
\lim_{t\rightarrow\infty} h_{21}(t)
&=\lim_{t\rightarrow\infty}
\left((t-a)^2\left(\frac 1t+\frac{1}{2t^2}+O\left(\frac{1}{t^3}\right)\right)
-(t-a)\left(\log t-\frac{1}{2t}+O\left(\frac{1}{t^2}\right)\right)\right.\\
&\left.\quad\quad\quad +\left(t-\frac12\right)\log t-t+\frac12\log (2\pi)+O\left(\frac 1t\right)\right)\\
&=\lim_{t\rightarrow\infty}
\left(\frac{-2at+a^2}{t}+\left(a-\frac12\right)\log t+\frac{(t-a)^2}{2t^2}+\frac{t-a}{2t}+\frac12\log (2\pi)+O\left(\frac 1t\right)\right)\\
&=-\infty\,.
\end{align*}

For $1<a<2$\,, we have
$\lim\limits_{t\rightarrow a^+}h_{21}(t)=\log\Gamma(a)<0$
and $\lim\limits_{t\rightarrow\infty}h_{21}(t)=\infty$\,.

\medskip

Therefore $h_{21}(t)>0$ on $(a\,,\infty)$ and hence
$\widetilde{h}_2$ is strictly increasing on $(a\,,\infty)$
if and only if $\dfrac12\leq a\leq 1$ or $a\geq2$\,.

\medskip

Since $h_{21}$ is strictly increasing on $(a\,,\infty)$ for $1<a<2$\,, there exists $t_3\in(a\,,\infty)$
such that $h_{21}(t)<0$ on $(a\,,t_3)$ and $h_{21}(t)>0$ on $(t_3\,,\infty)$\,,
where $t_3$ satisfies $(t_3-a)^2\psi'(t_3)+\log\Gamma(t_3)=(t_3-a)\psi(t_3)$\,.

\medskip

Therefore $\widetilde{h}_2$ is strictly decreasing on $(a\,,t_3]$
and strictly increasing on $[t_3\,,\infty)$ if and only if $1<a<2$\,.

\medskip

Then we calculate the range of $\widetilde{h}_2$.

\medskip

By the proof of Lemma \ref{lem2}, we have $\lim\limits_{t\rightarrow\infty}\widetilde{h}_1(t)=-\infty$\,.
Using L'Hopital Rule and the asymptotic formula of $\psi'$, we obtain
\begin{align*}
\lim_{t\rightarrow\infty} \widetilde{h}_2(t)
=\lim_{t\rightarrow\infty} (t-a)\left(\frac 1t+O\left(\frac{1}{t^2}\right)\right)
=1\,,\quad a>0\,.
\end{align*}

Calculation yields the limiting value
$$
\lim\limits_{t\rightarrow a^+}\widetilde{h}_2(t)=
\left\{
\begin{aligned}
&-\infty\,, \quad &&\frac12\leq a<1 \,\,\hbox{or}\,\, a>2\,,\\
&0\,, \quad &&a=1 \,\,\hbox{or}\,\, a=2\,,\\
&\infty\,, \quad && 1<a<2\,.
\end{aligned}
\right.
$$

\medskip

Therefore $\widetilde{h}_2(t)\in(-\infty\,,1)$ for $\dfrac12\leq a<1$ or $a>2$\,;
$\widetilde{h}_2(t)\in(0\,,1)$ for $a=1$ or $a=2$\,;
and $\widetilde{h}_2(t)\in[\widetilde{h}_2(t_3)\,,\infty)$ for $1<a<2$\,.

\medskip

The proof is complete.
\end{proof}

\medskip

\begin{openproblem}
What is the monotonicity property of $h_2$ on $(0\,,\infty)$ for $0<a<\dfrac12$\,?
\end{openproblem}

\medskip

\begin{lemma}\label{lem4}
For $a>0$\,, let $x\in(0\,,\infty)$ and
\begin{equation*}
h_3(x) \equiv \dfrac{-x^2\psi'(x+a)+2x\psi(x+a)-2\log\Gamma(x+a)}{x}\,.
\end{equation*}
Then the function $h_3$ is strictly increasing on $(0\,,\infty)$ for $a\geq2$\,.
Moreover, $h_3(x)\in(-\infty\,,1)$ for $a>2$\,; and $h_3(x)\in(0\,,1)$ for $a=2$\,.
\end{lemma}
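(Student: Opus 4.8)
\textbf{Proof proposal for Lemma \ref{lem4}.}
The plan is to mimic the substitution and nested-monotonicity strategy already used for Lemma \ref{lem3}. First I would set $t=x+a$ and write
\begin{equation*}
h_3(x)=h_3(t-a)\equiv\widetilde{h}_3(t)
=\frac{-(t-a)^2\psi'(t)+2(t-a)\psi(t)-2\log\Gamma(t)}{t-a}\,,\quad t\in(a\,,\infty)\,,
\end{equation*}
so that it suffices to prove $\widetilde{h}_3$ is strictly increasing on $(a\,,\infty)$ for $a\ge2$, together with the claimed range. Differentiating, the sign of $\widetilde{h}_3'(t)$ is governed by a numerator $h_{31}(t)$ obtained by clearing the $(t-a)^2$ denominator; explicitly I expect something of the shape
\begin{equation*}
h_{31}(t)\equiv -(t-a)^3\psi''(t)-(t-a)^2\psi'(t)+2(t-a)\psi(t)-2\log\Gamma(t)+\text{(lower order)}\,,
\end{equation*}
and the key point is that $h_{31}(a^+)=-2\log\Gamma(a)$ which is $\le 0$ for $a\ge 2$ with the "wrong" sign, so a single differentiation of $h_{31}$ will not immediately close the argument — this is where I expect the main obstacle to lie.

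To handle this I would differentiate once more: compute $h_{31}'(t)$, pull out the factor $(t-a)$, and reduce to showing the remaining bracket is positive on $(a\,,\infty)$. Into that bracket I would substitute the two-sided bound \eqref{ineq1} for $\psi'$ (with $n=1$), the identity \eqref{eq2} for $\psi''$, and the analogous sharp bound for $\psi'''$ from \eqref{ineq1} (with $n=3$), replacing each polygamma term by its rational lower/upper estimate as appropriate for the direction of the inequality. After clearing a common power of $t$ in the denominator, this should collapse to the positivity of an explicit polynomial in $t$ (with coefficients depending on $a$) on $(a\,,\infty)$; I expect that for $a\ge 2$ the leading and constant coefficients, and hence the whole polynomial, are positive, exactly as the $(2a-1)t^2+(2a-1)t+a$ computation played out in Lemma \ref{lem3}. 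This chain then gives: the innermost polynomial is positive $\Rightarrow$ the bracket in $h_{31}'$ is positive $\Rightarrow$ $h_{31}$ is strictly increasing on $(a\,,\infty)$ $\Rightarrow$ $h_{31}(t)>h_{31}(a^+)$; here I will need $h_{31}(a^+)\ge 0$, which I expect holds precisely for $a\ge 2$ (the threshold $a=2$ presumably forcing $h_{31}(a^+)=0$), so that $h_{31}>0$, hence $\widetilde{h}_3'>0$, giving strict monotonicity.

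For the range statement I would use the asymptotic expansions of $\log\Gamma,\psi,\psi'$ to evaluate $\lim_{t\to\infty}\widetilde{h}_3(t)$; the $\psi'$ term contributes $-(t-a)^2(1/t+\cdots)\sim -t$, the $2\psi$ term contributes $2(t-a)\log t$, and $-2\log\Gamma(t)\sim -2(t-\tfrac12)\log t+2t$, so the $\log t$ terms cancel and one is left, after dividing by $t-a$, with limit $1$. For the left endpoint I would compute $\lim_{t\to a^+}\widetilde{h}_3(t)$ using the recurrence formulas to regularize the singular $\psi',\psi,\log\Gamma$ pieces near $t=a$; for $a>2$ this limit should be $-\infty$ (the dominant singular term coming from $-(t-a)^2\psi'(t)\cdot\frac{1}{t-a}$ against $\psi'(a)<\infty$ is actually finite, so the $-\infty$ must come from the $-2\log\Gamma(t)/(t-a)$ piece being harmless — I would double-check, but the stated conclusion suggests a delicate cancellation giving $-\infty$), while for $a=2$ the limit is $0$, using $\psi'(2)=\pi^2/6-1$, $\psi(2)=1-\gamma$, $\Gamma(2)=1$. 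Combining the monotonicity with these two boundary values yields $h_3(x)\in(-\infty,1)$ for $a>2$ and $h_3(x)\in(0,1)$ for $a=2$, completing the proof.
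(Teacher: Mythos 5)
Your route is exactly the paper's: substitute $t=x+a$, write $\widetilde{h}_3'(t)=h_{31}(t)/(t-a)^2$, differentiate $h_{31}$ once more, bound the bracket $(t-a)\psi'''(t)+2\psi''(t)$ from above by the rational estimates \eqref{ineq1} to reduce to the sign of the polynomial $2(2-a)t-6a$ (negative on $(a,\infty)$ precisely for $a\ge2$, the analogue of the $(2a-1)t^2+(2a-1)t+a$ step in Lemma \ref{lem3}), conclude $h_{31}$ is strictly increasing with $h_{31}(a^+)\ge0$, and finish with the limits at $\infty$ and at $a^+$. Two bookkeeping slips should be corrected, though neither is fatal. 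First, the numerator is $h_{31}(t)=(t-a)N'(t)-N(t)=-(t-a)^3\psi''(t)+(t-a)^2\psi'(t)-2(t-a)\psi(t)+2\log\Gamma(t)$, not the expression with the opposite signs you tentatively wrote; hence $h_{31}(a^+)=+2\log\Gamma(a)\ge0$ for $a\ge2$ (with equality at $a=2$), so the ``main obstacle'' you anticipate from $h_{31}(a^+)=-2\log\Gamma(a)\le0$ does not exist, and it is your later, contradictory requirement $h_{31}(a^+)\ge0$ that actually holds; note also that the bracket must be shown \emph{negative} (the prefactor is $-(t-a)^2$), and no bound on $\psi'$ is needed there since $\psi'$ does not appear in $h_{31}'$. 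Second, the limit $\lim_{t\to a^+}\widetilde{h}_3(t)=-\infty$ for $a>2$ involves neither a delicate cancellation nor any singular polygamma values (all of $\psi',\psi,\log\Gamma$ are smooth at $t=a\ge2$, so no recurrence-formula regularization is needed): the numerator tends to the strictly negative constant $-2\log\Gamma(a)$ while the denominator $t-a\to0^+$, so the $-2\log\Gamma(t)$ term is precisely the source of the divergence rather than a ``harmless'' piece; for $a=2$ both numerator and denominator vanish and one application of L'Hopital gives $-(t-a)^2\psi''(t)\big|_{t=2}=0$. With these corrections your argument coincides with the paper's proof.
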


\medskip

\begin{proof}
Let $t=x+a$\,. Then
\begin{align*}
h_3(x)=h_3(t-a)\equiv\widetilde{h}_3(t)
=\dfrac{-(t-a)^2\psi'(t)+2(t-a)\psi(t)-2\log\Gamma(t)}{t-a}\,,\quad t\in(a\,,\infty)\,.
\end{align*}
It suffices to study the monotonicity property and the range of $\widetilde{h}_3$\,.

\medskip

We first prove the monotonicity property of $\widetilde{h}_3$\,.

\medskip

Differentiation gives
\begin{align*}
\widetilde{h}_3'(t)
&\equiv \frac{h_{31}(t)}{(t-a)^2}\,,
\end{align*}
where
$$h_{31}(t) \equiv -(t-a)^3\psi''(t)+(t-a)^2\psi'(t)-2(t-a)\psi(t)+2\log\Gamma(t)\,.$$

It is easy to obtain
\begin{align*}
h_{31}'(t)
&=-(t-a)^2\left((t-a)\psi'''(t)+2\psi''(t)\right)\,.
\end{align*}

\medskip

By the inequalities (\ref{ineq1}) of $\psi''$ and $\psi'''$\,, we have
\begin{align*}
(t-a)\psi'''(t)+2\psi''(t)
&\leq (t-a)\left(\frac{2}{t^3}+\frac{6}{t^4}\right)+2\left(-\frac{1}{t^2}-\frac{1}{t^3}\right)\\
&= \frac{1}{t^4}\left(2(2-a)t-6a\right)\,.
\end{align*}

\medskip

Since $2(2-a)t-6a<0$ on $(a\,,\infty)$ if and only if $a\geq2$\,,
we have that $h_{31}$ is strictly increasing on $(a\,,\infty)$ for $a\geq2$ and hence
\begin{align*}
h_{31}(t)
>\lim_{t\rightarrow a^+}h_{31}(t)=2\log\Gamma(a)\,.
\end{align*}
Therefore $h_{31}(t)>0$ on $(a\,,\infty)$ and hence
$\widetilde{h}_3$ is strictly increasing on $(a\,,\infty)$ for $a\geq2$\,.

\medskip

Then we calculate the range of $\widetilde{h}_3$.

\medskip

By the asymptotic formulas of $\log\Gamma\,,\,\psi\,,\,\psi'$\,, we get
\begin{align*}
&\lim_{t\rightarrow\infty}
\left(-(t-a)^2\psi'(t)+2(t-a)\psi(t)-2\log\Gamma(t)\right)\\
=&\lim_{t\rightarrow\infty}
\left(-(t-a)^2\left(\frac 1t+\frac{1}{2t^2}+O\left(\frac{1}{t^3}\right)\right)
+2(t-a)\left(\log t-\frac{1}{2t}+O\left(\frac{1}{t^2}\right)\right)\right.\\
&\left.\quad\quad -2\left(\left(t-\frac12\right)\log t-t+\frac12\log(2\pi)+O\left(\frac{1}{t}\right)\right)\right)\\
=&\lim_{t\rightarrow\infty}
\left(t\left((1-2a)\frac{\log t}{t}-\frac{(t-a)^2}{t^2}+2\right)-\frac{(t-a)^2}{2t^2}-\frac{t-a}{t}-\log(2\pi)+O\left(\frac{1}{t}\right)\right)\\
=&\infty\,.
\end{align*}
L'Hopital Rule and the asymptotic formula of $\psi''$ yield
\begin{align*}
\lim_{t\rightarrow\infty} \widetilde{h}_3(t)
=\lim_{t\rightarrow\infty} \left(-(t-a)^2\left(-\frac{1}{t^2}+O\left(\frac{1}{t^3}\right)\right)\right)=1\,.
\end{align*}

It is easy to obtain
$$
\lim\limits_{t\rightarrow a^+}\widetilde{h}_3(t)=
\left\{
\begin{aligned}
&-\infty\,, \quad &&a>2\,,\\
&0\,, \quad &&a=2\,.
\end{aligned}
\right.
$$

Therefore $\widetilde{h}_3(t)\in(-\infty\,,1)$ for $a>2$\,; and $\widetilde{h}_3(t)\in(0\,,1)$ for $a=2$\,.
\end{proof}

\medskip

\begin{openproblem}
What is the monotonicity property of $h_3$ on $(0\,,\infty)$ for $0<a<2$\,?
\end{openproblem}

\medskip

\begin{lemma}\label{lem5}
For $a>0$\,, let $x\in(0\,,\infty)$ and
\begin{equation*}
h_4(x) \equiv\dfrac{x(x+a)\psi(x+a)-(x+a)\log\Gamma(x+a)}{x^2}\,.
\end{equation*}

(1) The function $h_4$ is strictly increasing on $(0\,,\infty)$ for $a\geq2$\,.
Moreover, $h_4(x)\in(-\infty\,,1)$ for $a>2$\,; and $h_4(x)\in\left(\dfrac{\pi^2}{6}-1\,,1\right)$ for $a=2$\,.

(2) The function
$h_4$ is strictly decreasing on $(0,x_4]$ and strictly increasing on $[x_4,\infty)$ with $h_4(x)\in[h_4(x_4)\,,\infty)$
for $\dfrac{3+\sqrt{159}}{12}\le a<2$\,,
where $x_4$ is the same as in Theorem \ref{thm3} (1).
\end{lemma}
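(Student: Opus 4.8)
The plan is to follow the substitution strategy used in Lemmas~\ref{lem2}--\ref{lem4}. First I would put $t=x+a$, so that $h_4(x)=\widetilde h_4(t)\equiv\dfrac{t\big((t-a)\psi(t)-\log\Gamma(t)\big)}{(t-a)^2}=-\dfrac{t\,\widetilde h_1(t)}{(t-a)^2}=\dfrac{t}{t-a}\,\widetilde h_2(t)$ for $t\in(a,\infty)$, where $\widetilde h_1,\widetilde h_2$ are the functions from the proofs of Lemmas~\ref{lem2} and \ref{lem3}; then it suffices to determine the monotonicity and range of $\widetilde h_4$. Differentiation gives $\widetilde h_4'(t)=h_{41}(t)/(t-a)^3$ with $h_{41}(t)\equiv t(t-a)^2\psi'(t)-(t^2-a^2)\psi(t)+(t+a)\log\Gamma(t)$, so on $(a,\infty)$ the sign of $\widetilde h_4'$ equals that of $h_{41}$. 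Since $h_{41}$ still contains $\psi$ and $\log\Gamma$, I would differentiate twice more: a routine computation yields $h_{41}'(t)=t(t-a)^2\psi''(t)+2(t-a)^2\psi'(t)+\widetilde h_1(t)$ and $h_{41}''(t)=(t-a)\,h_{42}(t)$, where $h_{42}(t)\equiv t(t-a)\psi'''(t)+(5t-3a)\psi''(t)+3\psi'(t)$ involves only polygamma functions of order $\ge1$.

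The core step is a lower bound for $h_{42}$. Using the inequalities~(\ref{ineq1}) for $\psi'$ and $\psi'''$ together with the identity~(\ref{eq2}) for $\psi''$, and observing that the coefficients $t(t-a)$, $5t-3a$, and $3$ are all positive on $(a,\infty)$ (as $a>\frac12$ here), one obtains $h_{42}(t)>\dfrac{(2a-1)t^2-5t+3a}{2t^4}$ on $(a,\infty)$. The quadratic $q_a(t)\equiv(2a-1)t^2-5t+3a$ has positive leading coefficient and discriminant $25-12a(2a-1)=25-24a^2+12a$, which is $\le0$ exactly when $24a^2-12a-25\ge0$, i.e. $a\ge\frac{3+\sqrt{159}}{12}$ --- this is precisely where the threshold in part~(2) originates. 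Hence for $a\ge\frac{3+\sqrt{159}}{12}$ (in particular for $a\ge2$) we have $q_a(t)\ge0$ for all real $t$, so $h_{42}(t)>0$ on $(a,\infty)$, whence $h_{41}''(t)=(t-a)h_{42}(t)>0$ and $h_{41}'$ is strictly increasing on $(a,\infty)$.

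For part~(1), $a\ge2$ gives $h_{41}'(a^+)=\widetilde h_1(a)=\log\Gamma(a)\ge0$, so $h_{41}'>0$ and $h_{41}$ is strictly increasing on $(a,\infty)$; then $h_{41}(t)>h_{41}(a^+)=2a\log\Gamma(a)\ge0$, hence $\widetilde h_4'>0$ and $\widetilde h_4$ is strictly increasing there. For the range, $\lim_{t\to\infty}\widetilde h_4(t)=1$ follows from $\widetilde h_4(t)=\frac{t}{t-a}\widetilde h_2(t)$ and Lemma~\ref{lem3}, while the Taylor expansion $(t-a)\psi(t)-\log\Gamma(t)=-\log\Gamma(a)+\frac12(t-a)^2\psi'(a)+O((t-a)^3)$ gives $\lim_{t\to a^+}\widetilde h_4(t)=-\infty$ when $a>2$ (as $\log\Gamma(a)>0$) and $\lim_{t\to a^+}\widetilde h_4(t)=\frac{a}{2}\psi'(a)=\psi'(2)=\frac{\pi^2}{6}-1$ when $a=2$ (as $\log\Gamma(2)=0$); combining these gives the stated ranges.

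For part~(2), $\frac{3+\sqrt{159}}{12}\le a<2$ gives $\log\Gamma(a)<0$. Since $h_{41}'$ increases strictly from the negative value $\log\Gamma(a)$ and $\lim_{t\to\infty}h_{41}'(t)=+\infty$ (the $t$-order terms in the asymptotic expansion cancel, leaving $(a-\frac12)\log t+O(1)$), $h_{41}'$ has a unique zero, so $h_{41}$ first strictly decreases then strictly increases on $(a,\infty)$; as moreover $h_{41}(a^+)=2a\log\Gamma(a)<0$ and $\lim_{t\to\infty}h_{41}(t)=+\infty$, $h_{41}$ has a unique zero $t_4$ with $h_{41}<0$ on $(a,t_4)$ and $h_{41}>0$ on $(t_4,\infty)$. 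Thus $\widetilde h_4$ is strictly decreasing on $(a,t_4]$ and strictly increasing on $[t_4,\infty)$; writing $x_4=t_4-a$, the relation $h_{41}(t_4)=0$ becomes $x_4^2(x_4+a)\psi'(x_4+a)+(x_4+2a)\log\Gamma(x_4+a)=x_4(x_4+2a)\psi(x_4+a)$, which is the defining equation for $x_4$ in Theorem~\ref{thm3}(1), and $h_4(x_4)=\widetilde h_4(t_4)$; together with $\lim_{t\to a^+}\widetilde h_4(t)=+\infty$ (now $\log\Gamma(a)<0$), $\lim_{t\to\infty}\widetilde h_4(t)=1$, and $\widetilde h_4(t_4)$ being the global minimum, this yields the range $[h_4(x_4),\infty)$. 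The hard part will be the elementary but delicate polynomial bookkeeping that pins down $\frac{3+\sqrt{159}}{12}$ as the exact threshold --- one must check that for $a$ just below it the (double) root $\frac{\sqrt{159}+3}{10}$ of $q_a$ lies strictly to the right of $a$, so $q_a$ really does become negative on $(a,\infty)$ and no weaker hypothesis suffices --- and the careful asymptotic and Taylor bookkeeping for the boundary limits, especially recovering $\psi'(2)=\frac{\pi^2}{6}-1$ from the indeterminate $0/0$ limit at $a=2$.
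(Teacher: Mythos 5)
Your proposal follows essentially the same route as the paper's proof: the substitution $t=x+a$, the same auxiliary function $h_{41}$ with the same two further differentiations, the same lower bound $\frac{1}{2t^4}\left((2a-1)t^2-5t+3a\right)$ obtained from the inequalities (\ref{ineq1}) for $\psi'$, $\psi'''$ and the identity (\ref{eq2}) for $\psi''$, leading to the same threshold $\frac{3+\sqrt{159}}{12}$, and the same limit analysis for the monotonicity split and the ranges (the paper computes the $a=2$ boundary limit by L'Hopital rather than your Taylor expansion, a negligible difference). The argument is correct.
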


\medskip

\begin{proof}
Let $t=x+a$\,. Then
\begin{align*}
{h}_4(x)={h}_4(t-a)\equiv\widetilde{h}_4(t)
=\frac{t(t-a)\psi(t)-t\log\Gamma(t)}{(t-a)^2}\,,\quad t\in (a\,,\infty)\,.
\end{align*}
It suffices to study the monotonicity property and the range of $\widetilde{h}_4$\,.

\medskip

We first prove the monotonicity property of $\widetilde{h}_4$\,.

\medskip

Differentiation gives
\begin{align*}
\widetilde{h}_4'(t)
&\equiv \frac{h_{41}(t)}{(t-a)^3}\,,
\end{align*}
where
$$h_{41}(t)\equiv t(t-a)^2\psi'(t)-(t^2-a^2)\psi(t)+(t+a)\log\Gamma(t)\,.$$

\medskip

It is easy to obtain
\begin{align*}
h_{41}'(t)
&=t(t-a)^2\psi''(t)+2(t-a)^2\psi'(t)-(t-a)\psi(t)+\log\Gamma(t)
\end{align*}
and
\begin{align*}
h_{41}''(t)
&=(t-a)\left(t(t-a)\psi'''(t)+(5t-3a)\psi''(t)+3\psi'(t)\right)\,.
\end{align*}

\medskip

By the inequalities (\ref{ineq1}) of $\psi'\,,\psi'''$ and the identity (\ref{eq2}) of $\psi''$\,, we get
\begin{align*}
&\quad t(t-a)\psi'''(t)+(5t-3a)\psi''(t)+3\psi'(t)\\
&>t(t-a)\left(\frac{2}{t^3}+\frac{3}{t^4}\right)+(5t-3a)\left(-\frac{1}{t^2}-\frac{1}{t^3}-\frac{1}{2t^4}\right)
+3\left(\frac{1}{t}+\frac{1}{2t^2}\right)\\
&=\frac{1}{2t^4}\left((2a-1)t^2-5t+3a\right)\,.
\end{align*}

Since $(2a-1)t^2-5t+3a\geq0$ on $(a\,,\infty)$ if and only if $a\geq\dfrac{3+\sqrt{159}}{12}\approx1.3$\,,
we have that $h_{41}'$ is strictly increasing on $(a\,,\infty)$ for $a\geq\dfrac{3+\sqrt{159}}{12}$ and hence
\begin{align*}
h_{41}'(t)>\lim\limits_{t\rightarrow a^+}h_{41}'(t)=\log\Gamma(a)\,.
\end{align*}
Moreover, $h_{41}'(t)>0$
and hence $h_{41}$ is strictly increasing on $(a\,,\infty)$ for $a\geq2$\,.

Then for $a\geq2$\,, we have
$$h_{41}(t)>\lim\limits_{t\rightarrow a^+}h_{41}(t)=2a\log\Gamma(a)\,.$$
Therefore $h_{41}(t)>0$ and hence
$\widetilde{h}_4$ is strictly increasing on $(a\,,\infty)$ for $a\geq2$\,.

\medskip

We consider the case for $\dfrac{3+\sqrt{159}}{12}\le a<2$ in the following.

\medskip

The limiting value
$\lim\limits_{t\rightarrow a^+}h_{41}'(t)=\log\Gamma(a)<0$ is clear.

By the asymptotic formulas of $\log\Gamma\,,\,\psi\,,\,\psi'\,,\psi''$\,, we get

\begin{align*}
\lim_{t\rightarrow\infty} h_{41}'(t)
&=\lim_{t\rightarrow\infty} \left( t(t-a)^2\left(-\frac{1}{t^2}-\frac{1}{t^3}+O\left(\frac{1}{t^4}\right)\right)
+2(t-a)^2\left(\frac{1}{t}+\frac{1}{2t^2}+O\left(\frac{1}{t^3}\right)\right) \right.\\
&\left.\quad\quad\quad  -(t-a)\left(\log t-\frac{1}{2t}+O\left(\frac{1}{t^2}\right)\right)
+\left(t-\frac12\right)\log t-t+\frac12\log (2\pi)+O\left(\frac1t\right)\right)\\
&=\lim_{t\rightarrow\infty} \left(\frac{-2at+a^2}{t}+\left(a-\frac12\right)\log t+\frac{t-a}{2t}+\frac12\log (2\pi)+O\left(\frac1t\right)\right)\\
&=\infty\,.
\end{align*}

Since $h_{41}'$ is strictly increasing on $(a\,,\infty)$ for $a\geq\dfrac{3+\sqrt{159}}{12}$\,,
there exists $ \tilde{t}_4\in(a\,,\infty)$ such that
$h'_{41}(t)<0$ on $(a\,,\tilde{t}_4)$ and $h'_{41}(t)>0$ on $(\tilde{t}_4\,,\infty)$\,,
where $\tilde{t}_4$ satisfies $\tilde{t}_4(\tilde{t}_4-a)^2\psi''(\tilde{t}_4)+2(\tilde{t}_4-a)^2\psi'(\tilde{t}_4)+\log\Gamma(\tilde{t}_4)=(\tilde{t}_4-a)\psi(\tilde{t}_4)$\,.

Hence $h_{41}$ is strictly decreasing on $(a\,,\tilde{t}_4]$ and strictly increasing on $[\tilde{t}_4\,,\infty)$
for $\dfrac{3+\sqrt{159}}{12}\le a<2$\,.

\medskip

The limit values $\lim\limits_{t\rightarrow a^+}h_{41}(t)=2a\log\Gamma(a)<0$ is clear.

By the asymptotic formulas of $\log\Gamma\,,\,\psi\,,\,\psi'$\,, we get
\begin{align*}
\lim_{t\rightarrow\infty} h_{41}(t)
&=\lim_{t\rightarrow\infty} \left(t(t-a)^2\left(\frac1t+\frac{1}{2t^2}+\frac{1}{6t^3}+O\left(\frac{1}{t^5}\right)\right)
-(t^2-a^2)\left(\log t-\frac{1}{2t}-\frac{1}{12t^2}\right.\right.\\
&\quad\quad\quad \left.\left.+O\left(\frac{1}{t^4}\right)\right)
+(t+a)\left(\left(t-\frac12\right)\log t-t+\frac12\log 2\pi+\frac{1}{12t}+O\left(\frac{1}{t^3}\right)\right)\right)\\
&=\lim_{t\rightarrow\infty} \left(t\left(\left(a-\frac12\right)\log t
+\left(a^2-\frac{a}{2}\right)\frac{\log t}{t}+1+\frac12\log (2\pi)-3a\right)\right.\\
&\quad\quad\quad \left.+a\left(a+\frac12\log (2\pi)-1\right)+\frac{4t^2-3at+a^2}{12t^2}
+O\left(\frac{1}{t^2}\right)\right)\\
&=\infty\,.
\end{align*}

By the monotonicity property of $h_{41}$\,,
there exists $t_4\,(>\tilde{t}_4)$ such that $h_{41}(t)<0$ on $(a\,,t_4)$ and $h_{41}(t)>0$ on $(t_4\,,\infty)$\,,
where $t_4$ satisfies $t_4(t_4-a)^2\psi'(t_4)+(t_4+a)\log\Gamma(t_4)=(t_4^2-a^2)\psi(t_4)$\,.

Therefore $\widetilde{h}_4$ is strictly decreasing on $(a\,,t_4]$ and
strictly increasing on $[t_4\,,\infty)$ for $\dfrac{3+\sqrt{159}}{12}\le a<2$.

\medskip

Then we calculate the range of $\widetilde{h}_4$\,.

\medskip

The following limiting values
$$
\lim\limits_{t\rightarrow\infty}\widetilde{h}_4(t)
=\lim\limits_{t\rightarrow\infty}\dfrac{t}{t-a}\widetilde{h}_2(t)
=1\,,\quad a>0\,,
$$
and
$$
\lim\limits_{t\rightarrow a^+}\widetilde{h}_4(t)=
\left\{
\begin{aligned}
&\infty\,,
\quad &&\dfrac{3+\sqrt{159}}{12}\le a<2\,,\\
&-\infty\,, \quad &&a>2\
\end{aligned}
\right.
$$
are clear.

For $a=2$\,, by L'Hopital Rule, we get
\begin{align*}
\lim_{t\rightarrow2^+}\widetilde{h}_4(t)
&=\psi'(2)=\frac{\pi^2}{6}-1\,.
\end{align*}

Therefore $\widetilde{h}_4(t)\in(-\infty\,,1)$ for $a>2$\,;  $\widetilde{h}_4(t)\in\left(\dfrac{\pi^2}{6}-1\,,1\right)$ for $a=2$\,;
and $\widetilde{h}_4(t)\in[\widetilde{h}_4(t_4)\,,\infty)$ for $\dfrac{3+\sqrt{159}}{12}\le a<2$\,.

\medskip

The proof is complete.
\end{proof}

\medskip

\begin{openproblem}
What is the monotonicity property of $h_4$ on $(0\,,\infty)$ for $0<a<\dfrac{3+\sqrt{159}}{12}$\,?
\end{openproblem}

\bigskip

\section{Proofs of main results}

\begin{proof}[Proof of Theorem \ref{thm1}]
(1) Logarithmic differentiation gives
\begin{align*}
\frac{g'_1(x)}{g_1(x)}
&\equiv \frac{h_1(x)}{x^2}\,,
\end{align*}
where $h_1(x)$ is the same as in Lemma \ref{lem2}.

\medskip

By Lemma \ref{lem2}, we have that there exists $x_0\in(-a\,,\infty)$
such that $g_1$ is strictly increasing on $(-a\,,x_0)$
and strictly decreasing on $(x_0\,,\infty)$ if and only if $a\leq0$\,,
where $x_0$ satisfies $x_0\psi(x_0+a)=\log\Gamma(x_0+a)$\,;
$g_1$ is strictly decreasing on $(-a\,,x_1)$\,,\,$(x_2\,,\infty)$\,,
and strictly increasing on $(x_1\,,0)$\,,\,$(0\,,x_2)$ if and only if $0<a<1$ or $a>2$\,,
where $x_i$ satisfies $x_i\psi(x_i+a)=\log\Gamma(x_i+a)$\,, $i=1,2$\,;
and $g_1$ is strictly decreasing on
$(-a,0)$ and $(0,\infty)$ if and only if $1\le a\le 2$\,.

\medskip

(2) By (1), we have that $g_1$ is not LCM on $(-a\,,\infty)$ or $(0\,,\infty)$ for $a\leq0$\,,
$0<a<1$ or $a>2$\,.
Therefore we only need to consider the LCM property of $g_1$ for $1\leq a\leq2$\,.

\medskip

For $x\in(-a\,,\infty) \backslash \{0\}$\,, by the formula (\ref{eq1}), we get
\begin{align*}
(-1)^n(\log g_1(x))^{(n)}
&=(-1)^{n+1}\left(\frac{(-1)^n n!}{x^{n+1}}\log\Gamma(x+a)+\sum_{k=1}^{n}\frac{(-1)^{n-k}n!}{k!x^{n-k+1}}\psi^{(k-1)}(x+a)\right)\\
&\equiv \frac{n!}{x^{n+1}}\delta_n(x)\,,
\end{align*}
where $\delta_n(x)\equiv-\log\Gamma(x+a)-\sum\limits_{k=1}^{n}\dfrac{(-1)^kx^k}{k!}\psi^{(k-1)}(x+a)$
is the same as in the proof of Lemma \ref{lem6}.

By differentiation, we get
\begin{align*}
\delta_n'(x)
&=\frac{(-1)^{n+1}x^n}{n!}\psi^{(n)}(x+a)
=\sum_{k=0}^{\infty}\frac{x^n}{(k+x+a)^{n+1}}
\end{align*}
and hence
\begin{align*}
\delta_n'(x)
\left\{
\begin{aligned}
&<0\,,\quad x\in(-a\,,0)\,,\quad  &&\hbox{if}\,\, n \,\,\hbox{is odd}\,,\\
&>0\,,\quad x\in(0\,,\infty)\,,\quad  &&\hbox{if}\,\, n\,\, \hbox{is odd}\,,\\
&>0\,,\quad x\in(-a\,,\infty) \backslash \{0\}\,,\quad  &&\hbox{if}\,\, n\,\, \hbox{is even}\,.
\end{aligned}
\right.
\end{align*}

For $n$ is odd and $x\in(-a\,,\infty) \backslash \{0\}$\,, we have
\begin{align*}
\delta_n(x)
>\lim\limits_{x\rightarrow0}\delta_n(x)
=-\log\Gamma(a)\,.
\end{align*}
Then $\delta_n(x)>0$ and hence
$(-1)^n(\log g_1(x))^{(n)}>0$ on $(-a\,,\infty) \backslash \{0\}$ if and only if $1\leq a\leq2$\,.

\medskip

For $n$ is even and $x\in(-a\,,0)$\,, we have
\begin{align*}
\delta_n(x)
<\lim\limits_{x\rightarrow0}\delta_n(x)
=-\log\Gamma(a)\,.
\end{align*}
Then $\delta_n(x)<0$ and hence
$(-1)^n(\log g_1(x))^{(n)}>0$ on $(-a\,,0)$ if and only if $0<a\leq1$ or $a\geq2$\,.

\medskip

For $n$ is even and $x\in(0\,,\infty)$\,, we have
\begin{align*}
\delta_n(x)
>\lim\limits_{x\rightarrow0}\delta_n(x)
=-\log\Gamma(a)\,.
\end{align*}
Then $\delta_n(x)>0$ and hence
$(-1)^n(\log g_1(x))^{(n)}>0$ on $(0\,,\infty)$ if and only if $1\leq a\leq2$\,.

\medskip

Therefore $g_1$ is strictly LCM on $(0\,,\infty)$ if and only if $1\leq a\leq2$\,.

\medskip

(2)
By Lemma \ref{lem6}, we get
\begin{align*}
(-1)^n(\log g_1(0))^{(n)}
&=
\left\{
\begin{aligned}
&\frac{(-1)^{n+1}\psi^{(n)}(1)}{n+1}\,,\quad a=1\,,\\
&\frac{(-1)^{n+1}\psi^{(n)}(2)}{n+1}\,,\quad a=2\,,\\
\end{aligned}
\right.\\
\end{align*}
which are clearly positive.

Together with the proof in (1), we have that $g_1$ is strictly LCM on $(-a\,,\infty)$
if and only if $a=1$ or $a=2$\,.

\medskip

The proof is complete.
\end{proof}

\medskip

\begin{proof}[Proof of Theorem \ref{thm2}]
(1) Logarithmic differentiation leads to
\begin{equation}\label{deq3}
\begin{split}
\frac {g_2'(x)}{g_2(x)}
& \equiv \frac{h_2(x)-c}{x}\,,
\end{split}
\end{equation}
where $h_2(x)$ is the same as in
Lemma \ref{lem3}.

\medskip

By Lemma \ref{lem3}, we have that $g_2$ is strictly decreasing on $(0\,,\infty)$
if and only if $c\geq1$ for $\dfrac12\leq a\leq 1$ or $a\geq2$\,;
$g_2$ is strictly increasing on $(0\,,\infty)$ if and only if $c\leq0$ for $a=1$ or $a=2$\,;
and $g_2$ is strictly increasing on $(0\,,\infty)$ if and only if $c\leq h_2(x_3)$ for $1<a<2$\,,
where $x_3$ satisfies $x_3^2\psi'(x_3+a)+\log\Gamma(x_3+a)=x_3\psi(x_3+a)$\,.

\medskip

(2) Differentiation gives
\begin{align*}
(\log g_2(x))''
& \equiv \frac{c-h_3(x)}{x^2}\,,
\end{align*}
where $h_3(x)$ is the same as in Lemma \ref{lem4}.

\medskip

By Lemma \ref{lem4},
we have that $g_2$ is strictly log-convex on $(0\,,\infty)$ if and only if $c\geq1$ for $a\geq2$\,;
and $g_2$ is strictly log-concave on $(0\,,\infty)$ if and only if $c\leq0$ for $a=2$\,.

\medskip

(3) By \eqref{deq3}, it is easy to obtain
\begin{align*}
x\frac{g_2'(x)}{g_2(x)}
\equiv h_2(x)-c\,.
\end{align*}

By Lemma \ref{lem1} and Lemma \ref{lem3},
we have that $g_2$ is geometrically convex on $(0\,,\infty)$ if and only if $(a\,,c)\in D_1\cup D_2$\,;
and $g_2$ is geometrically concave on $(0\,,x_3)$ and geometrically convex on
$(x_3\,,\infty)$ if and only if $(a,c)\in D_3\cup D_4$\,.

\medskip

The proof is complete.
\end{proof}

\medskip

\begin{proof}[Proof of Theorem \ref{thm3}]
(1) Logarithmic differentiation gives
\begin{equation}\label{deq2}
\begin{split}
\frac {g_3'(x)}{g_3(x)}
& \equiv \frac{h_4(x)-c}{x+a}\,,
\end{split}
\end{equation}
where $h_4(x)$ is the same as in Lemma \ref{lem5}.

\medskip

By Lemma \ref{lem5}, we have that $g_3$ is strictly decreasing on $(0\,,\infty)$ if and only if $c\geq1$ for $a\geq2$\,;
$g_3$ is strictly increasing on $(0\,,\infty)$ if and only if $c\leq\dfrac{\pi^2}{6}-1$ for $a=2$\,;
and $g_3$ is strictly increasing on $(0\,,\infty)$ if and only if $c\leq h_4(x_4)$ for $\dfrac{3+\sqrt{159}}{12}\le a<2$\,,
where $x_4$ satisfies $x_4^2(x_4+a)\psi'(x_4+a)+(x_4+2a)\log\Gamma(x_4+a)=x_4(x_4+2a)\psi(x_4+a)$\,.

\medskip

(2) By (\ref{deq2}), it is easy to obtain
\begin{align}\label{g3}
\begin{split}
x\frac{g'_3(x)}{g_3(x)}
& \equiv h_2(x)+\frac{ac}{x+a}-c\,,
\end{split}
\end{align}
where $h_2(x)$ is the same as in Lemma \ref{lem3}.

By Lemma \ref{lem1} and Lemma \ref{lem3}, we have that
$g_3$ is geometrically convex on $(0\,,\infty)$ for $(a,c)\in D_9 \cup D_{10}$\,;
and $g_3$ is geometrically concave on $(0\,,x_3)$ for $(a,c)\in D_3$
and geometrically convex on $(x_3\,,\infty)$ for $(a,c)\in D_4$\,.

\medskip

The proof is complete.
\end{proof}

\bigskip

\section{Comparison of Inequalities}

In this section, we compare the inequalities appeared in the corollaries in Section 1.

\begin{remark}
For $c\leq0$ and $x\,,y>0$\,, there holds
\begin{align*}
\left(\frac{x+y}{2\sqrt{xy}}\right)^c
\leq1\,.
\end{align*}

Thus the inequality (\ref{inequality 2}) is better than
the reversed one of inequality (\ref{inequality 5})
for $(a,c)\in D_8$\,.
\end{remark}

\medskip

\begin{remark}
By Lemma \ref{lem3} (1), for $0<x<y$,
we have
\begin{align*}
\left(\frac xy\right)^{\frac{x\psi(x+a)-\log\Gamma(x+a)}{x}}
<1
<\left(\frac xy\right)^c
\end{align*}
for $(a,c)\in D_7 \cup D_8$\,;
and
\begin{align*}
\left(\frac xy\right)^c
<\left(\frac xy\right)^{\frac{y\psi(y+a)-\log\Gamma(y+a)}{y}}
\end{align*}
for $(a,c)\in D_5 \cup D_6$\,.

\medskip

Thus the right side of the inequalities (\ref{inequality 3}) is better than
the inequality (\ref{inequality 1}) and the reversed one of inequality (\ref{inequality 4})
for $(a,c)\in D_7 \cup D_8$\,;
and the left side of the inequalities (\ref{inequality 3}) is better than the inequality (\ref{inequality 4})
for $(a,c)\in D_5 \cup D_6$\,.
\end{remark}

\medskip

\begin{remark}
By \eqref{g3} and Lemma \ref{lem3} (1), it is easy to obtain
\begin{align*}
\lim_{x\rightarrow0^+} x\frac{g'_3(x)}{g_3(x)}
=0\qquad
\hbox{and}
\qquad
\lim_{x\rightarrow\infty} x\frac{g'_3(x)}{g_3(x)}
=1-c\,,
\end{align*}
and hence for $0<x<y$, there holds
\begin{align*}
\left(\frac xy\right)^{\frac{x\psi(x+a)-\log\Gamma(x+a)}{x}-\frac{cx}{x+a}}
<1
\end{align*}
for $(a,c)\in D_8$\,.

\medskip

Thus the right side of the inequalities (\ref{inequality 7}) is better than
the reversed one of inequality (\ref{inequality 6}) for $(a,c)\in D_8$\,.
\end{remark}

\medskip

{\bf Acknowledgements.}
This research was supported by National Natural Science Foundation of China
(NNSFC) under Grant Nos. 11771400 and 11601485.

\medskip



\end{document}